\newtheorem{thm}{Theorem}[section]
\newtheorem{lem}[thm]{Lemma}
\newtheorem{prop}[thm]{Proposition}
\newtheorem{cor}[thm]{Corollary}
\newtheorem{exs}[thm]{Examples}
\newtheorem{ex}[thm]{Example}
\theoremstyle{definition}
\theoremstyle{remark}
\newtheorem{remark}[thm]{Remark}
\numberwithin{equation}{section}
\newcommand{\sbq}{\subseteq}
\newcommand{\mc}{\mathcal}
\newcommand{\vide}{\emptyset} 
\newcommand{\tbf}{\textbf}
\newcommand{\mbf}{\mathbf}
\newcommand{\mf}{\mathfrak}
\newcommand{\qcl}{\ensuremath{Q_{\text{cl}}\,}}
\newcommand{\inv}{^{-1}}
\newcommand{\bz}{\mbf{0}}
\newcommand{\C}{\text{C}}
\DeclareMathOperator{\spec}{Spec}
\DeclareMathOperator{\ann}{ann}
\DeclareMathOperator{\coz}{coz}
\DeclareMathOperator{\z}{\mbf{z}}
\DeclareMathOperator{\supp}{supp}
\newcommand{\R}{\mathbf R}
\newcommand{\N}{\mathbf N}
\newcommand{\Q}{\mathbf Q}
\newcommand{\Z}{\mathbf Z}
\newcommand{\B}{\mathbf B}
\DeclareMathOperator{\cl}{cl}
\newcommand{\varep}{\varepsilon}
\newcommand{\res}{\raisebox{-.5ex}{$|$}}
\newcommand{\ov}{\overline}
\newcommand{\cx}{\text{C}(X)}
\newcommand{\rr}{\tbf{rr}}
\newcommand{\rro}{\le_{\text{rr}}}
\newcommand{\wrr}{\wedge_{\text{rr}}}
\newcommand{\radi}{\sqrt{I}}
\begin{document}

\title[Reduced ring order II]{The reduced ring order and lower semi-lattices II\\ \emph{preprint}}

 \author{W.D. Burgess}
\address{Department of Mathematics and Statistics\\ University of Ottawa, Ottawa, Canada, K1N 6N5}
\curraddr{}
\email{wburgess@uottawa.ca}

\author{R. Raphael}
\address{Department of Mathematics and Statistics\\ Concordia University, Montr\'eal, Canada, H4B 1R6}
\curraddr{}
\email{r.raphael@concordia.ca}

\subjclass[2010]{06F25\ 16W80 46E25\ 13B30}

\date{}\keywords{reduced ring order, lifting orthogonal sets, localizations, generalized power series rings}

\begin{abstract} This paper continues the study of the reduced ring order (\rr-order) in reduced rings where $a\rro b$ if $a^2=ab$. A reduced ring is called \rr-good if it is a lower semi-lattice in the order. Examples include weakly Baer rings (wB or PP-rings) but many more.  Localizations are examined relating to this order as well as the Pierce sheaf. Liftings of rr-orthogonal sets over surjections of reduced rings are studied. A known result about commutative power series rings over wB rings is extended, via methods developed here, to very general, not necessarily commutative, power series rings defined by an ordered monoid, showing that they are wB. 
\end{abstract}
\maketitle

\setcounter{section}{1}  \setcounter{thm}{0}
 \tbf{1. Introduction.}   The \emph{reduced ring order}, here called the \emph{\rr-order}, is defined on a reduced ring  $R$ (a ring without non-trivial nilpotent elements) by $a\rro b$ if $a^2=ab$. This order has also been called Abian's order and Conrad's order.  The earliest reference to this order seems to be \cite{S}. The order, appropriately modified, has also appeared in the study of certain semigroups and more general semiprime rings (\cite{BR2}).  This is a partial order that generalizes the natural order on boolean rings.  It has been used to characterize direct products (\cite{Ch}) and to look at the complete ring of quotients of a commutative reduced  ring as an orthogonal completion in this order (see, for example \cite{BR1}).  More recently, in \cite{BR3}, the question of when a reduced ring is a lower semi-lattice in this order was examined as well as when countable orthogonal sets (in the \rr-order) can be lifted over a ring surjection. 
 
 For a pair of elements $a,b\in R$, it is rare for an \rr-supremum to exist (\cite[Propositions~2.7 and 2.8]{BR3}) but the analogy with boolean rings is closer when each pair has an \rr-infimum making $R$ a lower semi-lattice in the \rr-order.  When this occurs for a ring $R$, the ring is called \emph{\rr-good}.

In some ways this article can be considered as a continuation of \cite{BR3} but it looks at different aspects of the \rr-order.

The paper is divided into sections. Section~2 develops some tools used later in the article and also studies connections with integral extensions. Section~3 takes another look at the question of lifting \rr-orthogonal sets over a surjective ring homomorphism of reduced rings $R\to S$, a major topic from \cite{BR3}. When the ring $R$ is \rr-good then an \rr-orthogonal pair in $S$ can always be lifted to an \rr-orthogonal pair in $R$.  This is not the case if $R$ does not have this property, as an example shows. Lifting countable \rr-orthogonal sets is revisited at the end of the section. 
 
In Section~4, the behaviour of the \rr-good property in localization in (mostly) commutative rings is examined. The results are mixed. I.e., there are cases where the \rr-good property is preserved and cases where it is not.  An example shows that a reduced local ring need not be \rr-good. 
 
Section~5 looks at the \rr-good property in terms of the Pierce sheaf.  Having Pierce stalks that are \rr-good is a necessary but not a sufficient condition for a ring to be \rr-good, as an example shows. Weakly Baer rings (also known as pp-rings) are \rr-good and the property of being weakly Baer passes to very general forms of power series rings, commutative or not, generalizing known results in the commutative case. This is the content of Section~6 which uses methods developed in Section~5.
  
 A companion article, \cite{BR4}, looks at the \rr-order in the special case where the ring is of the form $\cx$, the ring of real valued continuous functions on a topological space $X$.  The goal there is to connect topological properties of the space $X$ and the behaviour of $\cx$ with respect to the \rr-order, a topic first looked at in a section of \cite{BR3}.  Some of the key examples in the present article are of the form $\cx$. 
 
Before moving to details, here is a list of notations.\\[.5ex]
 \noindent\tbf{Terminology and definitions.} \\[-1.7ex]
 
(1) All rings considered in the sequel will be \emph{reduced} (with no non-trivial nilpotent elements) and unitary. 

(2) In a ring $R$, the annihilator of $a\in R$ is denoted $\ann a$; note that in a reduced ring left and right annihilators coincide. 

(3) The \rr-order in $R$ is defined by $a\rro b$ if $a^2=ab$. This is a partial order on the set $R$. It is preserved under homomorphisms between reduced rings. Notice that in $R$, non-zero divisors are maximal in the \rr-order and that 0 is the unique minimal element. 

(4) If in $R$, $c\rro a$ and $c\rro b$ then the notation $c\rro a,b$ is used. 

(6) For two elements $a,b\in R$, their \emph{infimum} in the \rr-order (when it exists) is denoted $a\wrr b$. 

(7) If, in $R$, the infimum of every pair of elements exists, then the ring is called \emph{\rr-good}. 

(8) A pair of elements $a,b\in R$ is called \rr-orthogonal if $a\wrr b=0$. It is easily checked that an orthogonal pair, i.e., $ab=0$, is \rr-orthogonal but not conversely.

(9) A ring $R$ is called \emph{weakly Baer} (wB) if for  $a\in R$, $\ann a$ is generated by an idempotent. Such rings are also called \emph{pp} or \emph{Rickart}. A ring $R$ is \emph{almost weakly Baer} (awB) if for $a\in R$, $\ann a$ is generated by idempotents. The name \emph{almost pp} is also used.  (See \cite{Be} and \cite{NR}.)

(10) The set of idempotents of $R$ (necessarily central) is denoted $\B(R)$. If $\B(R)= \{0,1\}$, then $R$ is called \emph{indecomposable}.

(11) For a (completely regular) topological space $X$, $\cx$ refers to the ring of continuous real valued functions on $X$ (see \cite{GJ} for all necessary details). For $f\in \cx$, the \emph{cozero set} of $f$ is $\{x\in X\mid f(x)\ne 0\}$, an open set, written $\coz f$; its complement is $\z(f)$. Notice that here, $h\rro f,g$ means that on $\cl(\coz h)$, $h, f$ and $g$ all coincide. If $X$ is connected, $\cx$ is indecomposable.

(12) In a topological space a \emph{clopen} set is one which is both open and closed.  

(13) The term ``regular ring'' means ``von Neumann regular''.

(14) The symbols $\R$, $\Q$ and $\N$ are for the the real numbers, the rational numbers and the natural numbers, respectively.\\[-.5ex]

The following shows some examples of \rr-good rings.
\begin{exs}\label{exsgood} The following types of reduced rings $R$ are \rr-good.

(1) All domains.

(2) All wB rings (including reduced regular rings).

(3) All rings of continuous functions $\cx$ where $X$ is a locally connected space. 

(4) All rings with three or fewer minimal primes. 

(5) Any ultrapower of \rr-good rings is \rr-good. 
 \end{exs}
 
 \begin{proof}
(1) Here the \rr-order is very coarse. For all $a\in R$, $0\rro a$ is the only relation and thus every pair of elements has an \rr-infimum. 
(2) In a wB ring the Pierce stalks of $R$ are domains and supports of elements are clopen in $\spec \B(R)$ (\cite[Lemma~3.1]{Be}). Then, \cite[Proposition~2.5]{BR3} applies. 
(3)~Here \cite[Theorem~3.5(1)]{BR3} gives the result.
(4)~This is \cite[Theorem~2.13(i)]{BR3}. 
(5)~Consider $P=\prod_{\alpha\in A}R_\alpha$ and a free ultrafilter $\mc{U}$ on $A$ and $I=\{(r_\alpha)\in P\mid \text{for some $U\in \mc{U}$}, r_\alpha =0\;\text{for all $\alpha\in U$}\}$. Let $S=P/I$. For $r\in P$, let $\bar{r} = r+I$. Clearly $S$ is reduced. The claim is that $S$ is \rr-good. Given $\bar{r},\bar{s}\in S$, for all $\alpha\in A$ let $r_\alpha \wrr s_\alpha = t_\alpha$. Then $t= (t_\alpha)_A \rro r,s$. The claim is that $\bar{t}= \bar{r}\wrr \bar{s}$.  Clearly $\bar{t}\rro \bar{r},\bar{s}$. Suppose $\bar{u}\rro \bar{r}, \bar{s}$. Then, $u^2-ur, u^2-us\in I$, and thus for some $U\in\mc{U}$ and all $\alpha\in U$, $u_\alpha \rro t_\alpha$ showing that $u^2-ut\in I$. Hence, $\bar{u}\rro \bar{t}$, as required.  (Note that Proposition~\ref{wrr}, below, could also have been used for a  proof.)
\end{proof}
 
 More information on the \rr-order is found in \cite{BR3} and will be quoted as needed.   \\[1ex]
 \setcounter{section}{2} \setcounter{thm}{0}
 \noindent\tbf{2. Some general results on the \rr-order.}  The first result, in spite of its simplicity, will be used later. Recall that a ring is \rr-good when it is a lower semi-lattice in the \rr-order. 
 \begin{lem} \label{bigidemp}  Let $R$ be a reduced ring which is \rr-good. For each $a\in R$, $\ann a$ contains a unique largest idempotent. \end{lem}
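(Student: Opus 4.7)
My plan is to exhibit the largest idempotent of $\ann a$ explicitly, as the \rr-infimum
\[
 b := 1 \wrr (1-a),
\]
which exists by the hypothesis that $R$ is \rr-good.

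The first step is to check that this $b$ is an idempotent lying in $\ann a$. From $b\rro 1$ one gets $b^2=b\cdot 1=b$, so $b\in\B(R)$. From $b\rro 1-a$ one gets $b^2=b(1-a)=b-ba$, and combining with $b^2=b$ yields $ba=0$, hence $b\in\ann a$.

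The second (and main) step is to show that every idempotent $e\in\ann a$ satisfies $e\le b$ in the boolean order on $\B(R)$. The key observation is that for an idempotent $e$, the condition $e\in\ann a$ is equivalent to $e\rro 1-a$: indeed $e^2=e(1-a)$ unwinds to $e=e-ea$, i.e., $ea=0$. Since also $e\rro 1$ holds trivially, $e$ is a common lower bound of $1$ and $1-a$; by the defining property of the infimum, $e\rro b$, so $e=e^2=eb$. Because idempotents in a reduced ring are central, this is exactly $e\le b$ in $\B(R)$. Uniqueness of a largest element in a poset is then automatic.

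I do not anticipate any real technical obstacle; the whole point of the lemma is to identify the correct infimum, namely $1\wrr(1-a)$, and this is precisely where the \rr-good hypothesis is consumed. Once that object is written down, the two verifications above are one-line computations exploiting the equivalence, for idempotents $e$, between $ea=0$ and $e\rro 1-a$.
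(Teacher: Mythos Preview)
Your proof is correct and follows essentially the same approach as the paper: both define the candidate as $1\wrr(1-a)$, verify it is an idempotent annihilating $a$, and then show any idempotent $f$ with $fa=0$ satisfies $f\rro 1,\,1-a$ and hence lies below it. The only difference is cosmetic (your exposition is more detailed and you spell out the boolean-order conclusion explicitly).
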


\begin{proof} Fix $a\in R$ and consider $e= 1\wrr (1-a)$. Then, $e^2=e1 =e$ and $ea=0$. If $f\in \B(R)$ and $fa=0$ then $f= f(1-a)$ and $f\rro 1, (1-a)$, showing $f\rro e$.  \end{proof}

Note that in Lemma~\ref{bigidemp}, the idempotent can be 0. It is shown in \cite[Theorem~2.6]{BR3} that an awB ring is wB if and only if it is \rr-good. This can be thought of as an illustration of the lemma. There are awB rings which are not wB, for example $\C(\beta\N \setminus \N)$ (\cite[Example~3.2]{NR}).

\begin{prop} \label{idempann}  (i)~Suppose that a reduced ring $R$ satisfies the condition that, for $a\in R$ if $\ann a \ne \bz$, there is a unique largest idempotent $0 \ne e \in \ann a$.  Then, $R$ is wB.  (ii)~If for each $a\in R$, $\ann a \ne \bz$ implies there is $0\ne e^2 =e \in \ann a$ and, moreover, $R$ is \rr-good, then $R$ is wB.  \end{prop}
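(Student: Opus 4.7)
The plan for (i) is to verify $\ann a = eR$ directly, where for a given $a\in R$ with $\ann a\ne\bz$, $e$ denotes the (hypothesized nonzero) unique largest idempotent in $\ann a$. Only the inclusion $\ann a\sbq eR$ requires work, so given $b\in\ann a$ I would aim to show that $c:=(1-e)b$ is zero.

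The key move is to place $c$ in the annihilator of $a+e$ and then prove that annihilator is itself zero. Direct computation gives $ac = a(1-e)b = ab - (ae)b = 0$ and $ec = e(1-e)b = 0$, so $(a+e)c = 0$; and $a+e\ne 0$, since $a = -e$ together with $ae = 0$ would force $e^2 = e = 0$, contradicting $e\ne 0$. Suppose, for contradiction, $\ann(a+e)\ne\bz$; the hypothesis of (i) applied to $a+e$ then supplies a nonzero idempotent $g\in\ann(a+e)$. Idempotents are central in a reduced ring, so $g(a+e)=0$ rearranges to $ga = -ge$; multiplying by $e$ and using $ae = 0$ yields $ge = 0$, which in turn forces $ga = 0$. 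Hence $g\in\ann a$, and the maximality of $e$ (i.e.\ $g = ge$ in the Boolean order on idempotents) together with $ge = 0$ collapses to $g = 0$, a contradiction. Therefore $\ann(a+e) = \bz$, $c = 0$, and $b = eb \in eR$.

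Part~(ii) should then fall out at once: Lemma~\ref{bigidemp} provides a (possibly zero) largest idempotent in every $\ann a$ using the \rr-good hypothesis, and the additional hypothesis of (ii) forces this idempotent to be nonzero whenever $\ann a\ne\bz$. So the setting of (i) is in force and $R$ is wB.

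The delicate point I anticipate is choosing the test element $a+e$ correctly: it is engineered so that any hypothetical nonzero idempotent in its annihilator must lie both in $\ann a$ (hence under $e$) and in $\ann e$ (hence orthogonal to $e$), and these two constraints together collapse it to zero. The single relation $ae = 0$ is precisely what makes both constraints apply; without the \rr-good structure underwriting the existence of the largest idempotent $e$, neither constraint would be available.
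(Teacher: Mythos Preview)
Your proof is correct and follows essentially the same route as the paper: both arguments pass to the element $a+e$, observe that any nonzero idempotent $g$ in $\ann(a+e)$ must satisfy $ga=-ge$, deduce $ge=0$ and $ga=0$ from $ae=0$, and then use maximality of $e$ in $\ann a$ to force $g=0$, yielding $(1-e)\ann a=0$. Your deduction of (ii) from (i) via Lemma~\ref{bigidemp} is likewise exactly the paper's reduction.
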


\begin{proof} The last statement follows from the first by Lemma~\ref{bigidemp}.  Suppose $a\in R$ with $\ann a\ne \bz$ and let $0 \ne e \in \ann a= A$ be the unique largest idempotent in $A$.  If $eA = A$ then $A =eR$ and the conclusion holds. Otherwise, $(1-e)A \ne \bz$. Consider $a+e$ and note that $aR\oplus eR$ is a direct sum. Since $\ann (a+e)$ contains $(1-e)A$, it is non-zero.  Therefore if $0\ne f^2=f\in \ann (a+e)$, then $af = -ef $.  This shows that $af=eaf=0$ and $f\in A$, giving that $f\rro e$.  But then $f-ef = 0$, showing that $f=0$, a contradiction. Hence, $A= eR$, as required.   \end{proof}

The following two observations give information about \rr-infima and will be used in Section~3.

\begin{lem}\label{lbs} Let $R$ be a reduced ring and $r,s\in R$. If $0\ne k_1\rro r,s$ then $r\wrr (s+k_1) =0$ or there exists $0\ne k_2\rro r, s+k_1$ with $k_1k_2=0$ and $k_1\rro k_1+k_2\rro r,s$.  \end{lem}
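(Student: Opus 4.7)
The plan is to show that every nonzero common lower bound $\ell$ with $\ell\rro r$ and $\ell\rro s+k_1$ automatically satisfies $\ell k_1 = 0$; once this is established, the required element is simply $k_2:=\ell$, while the other alternative $r\wrr(s+k_1)=0$ covers the case where no nonzero $\ell$ exists. So fix a nonzero $\ell$ with $\ell \rro r, s+k_1$, and work from the defining identities $\ell r = \ell^2 = \ell s + \ell k_1$ and $k_1 r = k_1^2 = k_1 s$.

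The key step will be to compute $\ell^2 k_1$ in two ways. First, recall that $\rro$-comparable elements in a reduced ring commute: from $a^2=ab$ one gets $a(a-b)=0$, hence $(a-b)a=0$, hence $a^2=ba$. This gives $rk_1=k_1r=k_1^2$ and $sk_1=k_1s=k_1^2$. Using $\ell^2=\ell r$ expands $\ell^2 k_1 = \ell(rk_1) = \ell k_1^2$; using $\ell^2=\ell(s+k_1)$ expands $\ell^2 k_1 = \ell s k_1 + \ell k_1^2$. Comparing the two forces $\ell s k_1 = 0$, and the identity $sk_1=k_1^2$ rewrites this as $\ell k_1^2 = 0$. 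The reduced-ring fact that $xy=0 \Leftrightarrow yx=0$ then propagates the conclusion: $\ell k_1^2 = 0$ yields $k_1\ell k_1 = 0$, whence $(\ell k_1)^2 = \ell(k_1\ell k_1) = 0$, and so $\ell k_1 = 0$ by reducedness.

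With $\ell k_1 = 0$ in hand, setting $k_2:=\ell$ discharges every remaining requirement. Indeed $k_1 k_2=0$ is immediate; the identity $\ell^2 = \ell s + \ell k_1$ collapses to $k_2^2 = k_2 s$, giving $k_2\rro s$; and $k_1(k_1+k_2)=k_1^2$ gives $k_1\rro k_1+k_2$. Finally, using $k_1 k_2 = k_2 k_1 = 0$ and the relations $k_1 r=k_1^2$, $k_2 r=k_2^2$, one expands $(k_1+k_2)^2 = k_1^2+k_2^2 = (k_1+k_2)r$, and likewise with $s$ in place of $r$, so $k_1+k_2 \rro r,s$. The one place that requires any insight is the double expansion of $\ell^2 k_1$; after that, everything follows from the reduced-ring zero-product rule and bookkeeping.
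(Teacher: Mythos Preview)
Your proof is correct and follows essentially the same approach as the paper: take any nonzero $k_2 \rro r, s+k_1$, establish $k_1 k_2 = 0$ via reduced-ring identities, then verify the remaining relations directly. The only cosmetic difference is in the route to $k_1 k_2 = 0$: the paper observes $k_2(r-s) = k_2 k_1$ and multiplies by $r-s$ to obtain $k_2(r-s)^2 = 0$, whereas you compute $\ell^2 k_1$ in two ways to reach $\ell k_1^2 = 0$; both then invoke the same reduced-ring cancellation.
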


\begin{proof}  Suppose that there is $0\ne k_2\rro r, s+k_1$. By definition, $k_1^2 = k_1r =k_1s$ and $k_2^2 = k_2r = k_2(s+k_1)$. From this, $k_1(r-s) =0$ and $k_2(r-s-k_1) =0$. Hence, $k_2(r-s) = k_2k_1$. Multiply this last equation by $r-s$ to get $k_2(r-s)^2 = k_2k_1(r-s) =0$, showing that $k_2(r-s)=0$. Then, $k_2(r-s-k_1)=0$ giving that $k_2k_1=0$.  
 
Now, $(k_1+k_2)^2= k_1^2+k_2^2= (k_1+k_2)r = k_1s+ k_2(s+k_1) =(k_1+k_2)s$.  Hence, $k_1+k_2 \rro r,s$  and also $k_1^2= k_1(k_1+k_2)$ showing $k_1\rro k_1+k_2\rro r,s$. \end{proof} 

\begin{cor} \label{basic} Let $R$ be a reduced ring and $r,s\in R$. If among all the non-zero \rr-lower bounds of $r$ and $s$ there is $k_1$ maximal in the \rr-order then $r\wrr (s+k_1)=0$. In particular if $k_1=r\wrr s$ then $r\wrr (s+k_1)=0$.  \end{cor}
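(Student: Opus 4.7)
The plan is to argue by contradiction using Lemma~\ref{lbs}. Assume that $r\wrr(s+k_1)\ne 0$. Since $k_1$ is a non-zero \rr-lower bound of $r$ and $s$, Lemma~\ref{lbs} produces an element $0\ne k_2\rro r,s+k_1$ with $k_1k_2=0$ and
\[
k_1\rro k_1+k_2\rro r,s.
\]
The strategy is to show that $k_1+k_2$ is then a non-zero \rr-lower bound of $r$ and $s$ that strictly dominates $k_1$ in the \rr-order, contradicting the maximality of $k_1$.

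First I would verify $k_1+k_2\ne 0$, which is where the reduced hypothesis enters: if $k_1+k_2=0$ then $k_1=-k_2$, and multiplying by $k_1$ together with $k_1k_2=0$ gives $k_1^2=0$, whence $k_1=0$, contradicting $k_1\ne 0$. Next I would observe that $k_1\ne k_1+k_2$ holds trivially because $k_2\ne 0$. Combined with $k_1\rro k_1+k_2$ and $k_1+k_2\rro r,s$, this exhibits a non-zero \rr-lower bound of $r,s$ that is strictly \rr-above $k_1$, contradicting the assumed maximality and forcing $r\wrr(s+k_1)=0$.

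For the ``in particular'' clause I would note that if $k_1=r\wrr s$, then $k_1$ is automatically maximal among non-zero \rr-lower bounds of $r,s$, so the first part applies. Alternatively, one may rerun the argument directly: $k_1+k_2\rro r,s$ forces $k_1+k_2\rro k_1$, which combined with $k_1\rro k_1+k_2$ and antisymmetry of the \rr-order yields $k_1=k_1+k_2$, so $k_2=0$, a contradiction.

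The only subtle point is the verification that $k_1+k_2\ne 0$; the rest is a direct unpacking of the conclusion of Lemma~\ref{lbs} together with the definition of maximality (respectively, infimum) in the \rr-order.
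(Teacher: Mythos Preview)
Your proof is correct and follows essentially the same route as the paper's: invoke the dichotomy of Lemma~\ref{lbs}, and use maximality of $k_1$ against the chain $k_1\rro k_1+k_2\rro r,s$ to force $k_2=0$. The paper's version is terser and leaves implicit the check that $k_1+k_2\ne 0$ (which also follows immediately from $0\ne k_1\rro k_1+k_2$, since in a reduced ring only $0$ lies \rr-below $0$); your explicit verification via $k_1k_2=0$ is a harmless alternative.
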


\begin{proof} In the proof of Lemma~\ref{lbs} the expression  $k_1\le k_1+k_2\rro r,s$ shows that $k_2=0$. Hence, $r$ and $s+k_1$ have no non-zero \rr-lower bounds. \end{proof}

The following proposition, in the commutative case, connects the \rr-order and integral extensions. 

\begin{prop} \label{intcl}  Let $R$ be a commutative reduced ring which is integrally closed in some commutative \rr-good ring $S$.  Then, $R$ is \rr-good. \end{prop}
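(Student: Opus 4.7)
The plan is to take an arbitrary pair $r,s\in R$, form their \rr-infimum in the larger ring $S$, and then show (a)~this infimum actually lies in $R$ using integral closedness, and (b)~it serves as the \rr-infimum inside $R$ as well. Concretely, since $S$ is \rr-good, we have an element $t=r\wrr_S s\in S$ satisfying $t^2=tr=ts$.

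The key observation is that the equation $t^2-rt=0$ is a monic relation of degree $2$ with coefficients in $R$. Hence $t$ is integral over $R$, and by the hypothesis that $R$ is integrally closed in $S$ we conclude $t\in R$.

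It remains to check that $t$ is the \rr-infimum of $r$ and $s$ inside $R$. The relations $t\rro r$ and $t\rro s$ are literally the same equations in $R$ as in $S$, so they persist. If $u\in R$ satisfies $u\rro r$ and $u\rro s$ in $R$, then the same equations $u^2=ur=us$ hold in $S$; since $t$ is the infimum in $S$ we get $u^2=ut$, an identity among elements of $R$, which gives $u\rro t$ in $R$.

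There is essentially no obstacle beyond noticing the integrality step: the nontrivial content is recognizing that the defining equation $t^2=tr$ of an \rr-lower bound is automatically a monic polynomial relation, which is exactly what the integral-closure hypothesis is designed to exploit. Commutativity is used only to ensure the standard framework of integral extensions applies.
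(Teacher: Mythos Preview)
Your proof is correct and follows essentially the same approach as the paper: form the \rr-infimum $t$ in $S$, use the monic quadratic $x^2-rx$ together with integral closedness to pull $t$ down into $R$, and then verify that $t$ remains the infimum in $R$ because the order relations are the same equations in both rings.
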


\begin{proof} Consider $r\in R$ and suppose $s\in S$ is such that $s\rro r$ in $S$. Then, $s$ satisfies the equation $x^2-rx=0$, showing that $s\in R$. Now consider $r_1, r_2\in R$ and let $s=r_1\wrr r_2$ in $S$. As above, $s\in R$ and $s\rro r_1, r_2$. If $t\in R$ is such that $t\rro r_1, r_2$ then this is true also in $S$ showing that $t\rro s$, which shows that $s=r_1 \wrr r_2$ in $R$. \end{proof}

\begin{cor} \label{clos}  Let $R$ be a commutative reduced ring embedded in a commutative \rr-good ring $S$. Then, the integral closure $T$ of $R$ in $S$ is \rr-good. In fact, the conclusion holds if $T$ is the quadratic integral closure of $R$ in $S$. \end{cor}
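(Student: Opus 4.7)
The plan is to derive this corollary directly from Proposition~\ref{intcl}. As a subring of the commutative reduced ring $S$, the integral closure $T$ inherits commutativity and reducedness. By the classical transitivity of integral dependence, $T$ is integrally closed in $S$. Hence Proposition~\ref{intcl} applies with $T$ in the role of $R$, and the first assertion follows.

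For the ``quadratic'' version, the main idea is to inspect the proof of Proposition~\ref{intcl} more carefully: the only place integral closedness is used is to conclude that an element $s\in S$ with $s\rro r$ (for $r$ in the ring) itself lies in the ring, and the witnessing polynomial is $x^2-rx$, which is a \emph{monic quadratic}. Call a subring $U \sbq S$ \emph{quadratically closed} in $S$ if every $s\in S$ annihilated by some monic polynomial of degree at most $2$ with coefficients in $U$ already lies in $U$. Interpret the quadratic integral closure $T$ of $R$ in $S$ as the smallest quadratically closed subring of $S$ containing $R$; this exists as the intersection of all such subrings, or alternatively is built by transfinitely iterating the adjunction of roots of monic quadratics over the current ring. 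With this $T$ in hand, reprise the proof of Proposition~\ref{intcl}: given $r_1, r_2 \in T$, form $s=r_1\wrr r_2$ in $S$; then $s^2-r_1 s=0$ is a monic quadratic relation with coefficients in $T$, so $s\in T$, and the verification that $s$ remains the \rr-infimum of $r_1, r_2$ inside $T$ is identical.

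The only obstacle is essentially definitional: one must choose an interpretation of ``quadratic integral closure'' that both produces a subring of $S$ and is small enough to be meaningful, yet closed under the exact operation (taking roots of monic quadratics) needed by the proof of Proposition~\ref{intcl}. Once this is fixed, no new calculation is required beyond the one already carried out there.
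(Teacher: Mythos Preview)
Your proposal is correct and matches the paper's (implicit) argument: apply Proposition~\ref{intcl} to $T$, using that the integral closure is itself integrally closed in $S$, and for the quadratic version observe that only the relation $x^2-rx=0$ is needed. The paper's follow-up remark makes exactly your point, building $T$ as a countable union $R_\infty=\bigcup_n R_n$ where each $R_{n+1}$ adjoins the roots in $S$ of polynomials $x^2-rx$ with $r\in R_n$; your transfinite/intersection description and use of all monic quadratics are harmless over-generalizations of the same idea.
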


Corollary~\ref{clos} does not even need the entire quadratic integral closure, only that obtained by adjoining roots of polynomials of the form $x^2-rx$ from $S$. This is done by an induction, that is, by a union of a countable chain starting with $R= R_0$ and then $R_1= R_0[B_1]$, where $B_1=\{s\in S\mid s\;\text{is a root of}\; x^2-rx\;\text{for some $r\in R_0$}\}$, and so on. The closure to be used is $R_{\infty}=\bigcup_{n\ge 0}R_n$. For every pair $r_1, r_2\in R_\infty$, the element $r_1\wrr r_2$, as calculated in $S$, will be found in $R_\infty$. \\[-.5ex]

  \setcounter{section}{3} \setcounter{thm}{0}
 \noindent\tbf{3. On lifting \rr-orthogonal sets.}
  One of the topics of the paper \cite{BR3} was to investigate when, for a surjective ring homomorphism $\phi\colon R\to S$, a countable \rr-orthogonal set in $S$ could be lifted to an \rr-orthogonal set in $R$.  The first topic in this section is to look at a more restricted question: can \rr-orthogonal pairs be lifted via $\phi$?  Lemma~\ref{lbs} and Corollary~\ref{basic} will be used to get positive results but an example will show that this cannot always be done.
 
 The next simple lemma will be used in Proposition~\ref{noeth}, below. Recall that in $\spec R$, for $r\in R$, $V(r)= \{\mf{p}\in \spec R\mid r\in \mf{p}\}$, a basic closed set in the Zariski topology on $\spec R$, and $D(r)$ is its complement.

\begin{lem}\label{Zaris}  Let $R$ be a reduced ring. If $a,b\in R$ and $a\rro b$ then $V(b)\sbq V(a)$ and $D(a)\sbq D(b)$.
\end{lem}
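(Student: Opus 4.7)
The plan is to unfold the definitions directly and use primality. By hypothesis $a \rro b$ means $a^2 = ab$, and a point $\mf{p}\in V(b)$ is simply a prime ideal containing $b$. So I would start with $\mf{p}\in V(b)$, observe that $ab\in\mf{p}$ because $b\in\mf{p}$, and then substitute to get $a^2 = ab \in \mf{p}$. Since $\mf{p}$ is prime, this forces $a\in\mf{p}$, i.e., $\mf{p}\in V(a)$. This gives the first inclusion $V(b)\sbq V(a)$.

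For the second inclusion $D(a)\sbq D(b)$, I would simply take complements in $\spec R$: since $D(r) = \spec R\setminus V(r)$, the inclusion $V(b)\sbq V(a)$ immediately yields $D(a) = \spec R\setminus V(a) \sbq \spec R\setminus V(b) = D(b)$. No separate argument is needed.

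There is no real obstacle here; the result is essentially a one-line consequence of primality applied to the defining equation $a^2=ab$. The fact that $R$ is reduced is not even used in the argument (it is part of the standing hypothesis on the rings considered in the paper, but primality alone suffices). The lemma is being recorded because it will feed into the Zariski-topological arguments of Proposition~\ref{noeth} later in Section~3.
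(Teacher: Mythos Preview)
Your proof is correct and is exactly the argument the paper has in mind; the paper's own proof is the single line ``This follows from the equation $a^2=ab$,'' and you have merely spelled it out.

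One small correction to your parenthetical remark: the paper allows non-commutative rings, and for a two-sided prime ideal $\mf{p}$ the implication ``$a^2\in\mf{p}\Rightarrow a\in\mf{p}$'' is not automatic (prime need not mean completely prime). In that setting reducedness \emph{is} used: from $a(a-b)=0$ in a reduced ring one gets $aR(a-b)=0\sbq\mf{p}$, whence $a\in\mf{p}$ or $a-b\in\mf{p}$, and either case together with $b\in\mf{p}$ gives $a\in\mf{p}$. In the commutative case your observation stands.
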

 
\begin{proof} This follows from the equation $a^2=ab$. \end{proof}

  \begin{prop}\label{lift} Suppose $R$ and $S$ are reduced rings where $R$ is \rr-good and $\phi\colon R\to S$ is a surjective homomorphism with kernel $K$. If $s,s'\in S$ are such that $s\wrr s'=0$ and $r\in R$ with $\phi(r)=s$  then there is $r'\in R$ with $\phi(r') =s'$ and $r\wrr r' =0$.  \end{prop}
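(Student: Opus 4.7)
The plan is to begin with an arbitrary lift of $s'$ and then correct it by an element of the kernel $K$ so as to kill the rr-infimum with $r$. First I would use surjectivity of $\phi$ to pick any $r_0\in R$ with $\phi(r_0)=s'$. Since $R$ is rr-good by hypothesis, the infimum $k = r\wrr r_0$ exists in $R$; this is precisely the place where rr-goodness is needed.

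Next I would verify that $k\in K$. Because the rr-order is preserved by homomorphisms between reduced rings (item~(3) of the terminology list), $k\rro r, r_0$ in $R$ forces $\phi(k)\rro \phi(r)=s$ and $\phi(k)\rro \phi(r_0)=s'$ in $S$. Since $s\wrr s'=0$, the only common rr-lower bound of $s$ and $s'$ is $0$, so $\phi(k)=0$, i.e., $k\in K$.

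Finally I would apply Corollary~\ref{basic} to the pair $r,r_0$ with $k_1=k=r\wrr r_0$, which yields $r\wrr(r_0+k)=0$. Setting $r'=r_0+k$ then gives $\phi(r')=\phi(r_0)+\phi(k)=s'+0=s'$ and $r\wrr r'=0$, as required.

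The main obstacle is essentially absent once Corollary~\ref{basic} is in hand: the nontrivial content of the argument has been packaged in that corollary. The only idea is to adjust the chosen lift $r_0$ by the specific element $k=r\wrr r_0$ of $K$, which is invisible modulo $K$ but, by the corollary, is exactly the correction needed to force the rr-infimum with $r$ down to $0$.
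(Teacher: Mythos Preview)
Your proposal is correct and follows essentially the same route as the paper's proof: lift $s'$ arbitrarily, set $k$ to be the rr-infimum with $r$, observe $k\in K$, and invoke Corollary~\ref{basic} to conclude that $r\wrr(r_0+k)=0$. The only cosmetic difference is that the paper treats the case $k=0$ separately (where there is nothing to do), whereas you absorb it into the general argument; your added justification that $\phi(k)\rro s,s'$ forces $\phi(k)=0$ is exactly the content of the paper's phrase ``note that $k\in K$''.
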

  
  \begin{proof}  Let $t\in R$ be such that $\phi(t) =s'$. If $r\wrr t=0$ there is nothing to prove. Otherwise, put $k=r\wrr t$ and note that $k\in K$. As in Corollary~\ref{basic}, $r\wrr (t+k)=0$ and if $r' =t+k$ then $\phi(r')=s'$. 
  \end{proof}
  
  Recall that in a noetherian space, ascending chains of open sets are finite. For a reduced ring $R$ it sometimes happens that $\spec R$, with the Zariski topology, is noetherian. This occurs if $R$ has the ACC on two-sided ideals but there are commutative non-noetherian examples.
  \begin{prop}\label{noeth}  Suppose $R$ and $S$ are reduced rings where $\spec R$ is noetherian and $\phi\colon R\to S$ is a surjective homomorphism with kernel $K$. If $s,s'\in S$ are such that $s\wrr s'=0$ and $r\in R$ with $\phi(r)=s$  then there is $r'\in R$ with $\phi(r') =s'$ and $r\wrr r' =0$.  \end{prop}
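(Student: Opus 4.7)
The plan is to follow the strategy of Proposition~\ref{lift}, but since $R$ is not assumed \rr-good I cannot simply write $r' = t + (r\wrr t)$ for a lift $t$ of $s'$. Instead I would use noetherianity of $\spec R$ to pick a maximal common \rr-lower bound of $r$ and $t$ (maximality being measured by the basic opens $D(\cdot)$ of Lemma~\ref{Zaris}), and then show that $t$ plus that lower bound is the required lift.

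Concretely, pick any $t \in R$ with $\phi(t)=s'$ and set $L = \{k \in R \mid k \rro r, t\}$. If $k \in L$ then $\phi(k) \rro s, s'$, and since $s \wrr s' = 0$ this forces $\phi(k) = 0$; thus $L \sbq K$ automatically. Each $k \in L$ determines an open $D(k) \sbq \spec R$ by Lemma~\ref{Zaris}, and since $\spec R$ is noetherian the family $\{D(k) : k \in L\}$ admits a maximal member $D(k^*)$ for some $k^* \in L$. I would then set $r' = t + k^*$; then $\phi(r') = s'$ because $k^* \in K$, and it remains to show $r \wrr r' = 0$.

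Suppose for contradiction that there is $0 \ne u \rro r, t + k^*$. If $k^* = 0$, then $u \in L$ itself has $D(u) \ne \vide$ (as $R$ is semiprime, so nonzero elements escape some prime), contradicting maximality of $D(k^*) = \vide$. If $k^* \ne 0$, Lemma~\ref{lbs} applied with $k_1 = k^*$ produces $0 \ne k_2 \rro r, t + k^*$ with $k^* k_2 = 0$ and $k^* + k_2 \rro r, t$, so $k^* + k_2 \in L$. Using $k^* k_2 = 0$ together with primeness one checks $D(k^* + k_2) = D(k^*) \cup D(k_2)$, with $D(k_2)$ nonempty and disjoint from $D(k^*)$, contradicting maximality of $D(k^*)$ once more. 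The only step that requires any care is the identification $D(k^* + k_2) = D(k^*) \cup D(k_2)$ for orthogonal $k^*, k_2$, but this reduces to the observation that if $k^* + k_2 \in \mf{p}$, primeness and $k^* k_2 = 0$ force one of $k^*, k_2$ into $\mf{p}$ and then the other follows by subtraction. I do not anticipate any substantive obstacle beyond this bookkeeping.
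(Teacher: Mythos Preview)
Your proof is correct and follows essentially the same route as the paper's: both arguments pick a lift $t$ of $s'$, use noetherianity of $\spec R$ together with Lemma~\ref{Zaris} to locate a common \rr-lower bound $k$ of $r$ and $t$ whose basic open $D(k)$ cannot be enlarged, and then invoke Lemma~\ref{lbs} (and the orthogonality $k k_2=0$ it supplies) to conclude $r\wrr(t+k)=0$. The only cosmetic difference is that the paper builds $k$ iteratively as $k_1+k_2+\cdots$ along a strictly ascending chain of basic opens and then appeals to Corollary~\ref{basic}, whereas you select $D(k^*)$ maximal in $\{D(k):k\in L\}$ in one step; these are the two standard equivalent formulations of noetherianity, and your explicit verification that $D(k^*+k_2)=D(k^*)\cup D(k_2)$ for orthogonal elements is exactly the detail the paper's chain argument is implicitly using.
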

  
  \begin{proof} The notation of the proof of Proposition~\ref{lift} is used with $k_1$ instead of $k$. By Lemma~\ref{lbs}  a chain of lower bounds can be created $k_1\rro k_1+k_2 \rro \cdots $. According to Lemma~\ref{Zaris} and the noetherian condition, this process stops at an \rr-lower bound maximal among the \rr-lower bounds of $r$ and $s$, all of which are in $K$. Corollary~\ref{basic} then applies. \end{proof}
  
  There is a similar conclusion to that in Proposition~\ref{noeth} if it is assumed that $K$, as an $R$-module has left or right finite Goldie dimension. This is because it can be shown, using that $R$ is reduced, that when the sequence $k_1\rro k_1+k_2 \rro k_1+k_2+k_3 \rro \cdots$ is constructed, as in Lemma~\ref{lbs}, it can be seen that the sum $Rk_1+ \cdots + Rk_n$ is direct. Similarly on the right. Hence, the process stops at a maximal $k$ among the elements $\rro r,s$. 

It is not known at this time if this procedure can be modified to allow the lifting of an \rr-orthogonal set of more than two elements.  

The following is an example of a ring surjection $R\to S$ and an \rr-orthogonal pair $s_1, s_2\in S$ which does not lift to an \rr-orthogonal pair in $R$.  In fact, the stronger property that $s_1s_2=0$ will be imposed. Clearly $R$ cannot be \rr-good nor can $\spec R$ be noetherian by Propositions~\ref{lift} and \ref{noeth}. In the construction, the ring $K$ is taken to be a countable field but that is only for concreteness; any countable commutative domain could be used.
 
\begin{ex}\label{prod0} There exist reduced rings $R$ and $S$ with a surjective homomorphism $\phi\colon R\to S$ and a pair of non-zero elements $s_1, s_2$ in $S$ with $s_1s_2 =0$ such that for no pair $r_1, r_2\in R$ with $\phi(r_1)=s_1$ and $\phi(r_2)= s_2$ are $r_1$ and $r_2$ \rr-orthogonal. In particular, $r_1r_2\ne 0$.
\end{ex}

\begin{proof}  Throughout $K$ will be an arbitrary countable field. A polynomial ring $T= K[Y_1,Y_2, \{X_{ij}\}_{ij\in \N_0\times \N}]$, where $N_0=\N\cup\{0\}$, is used along with an ideal of relations that will define $R$. The purpose of the relations is to create non-zero lower bounds for pairs of non-zero elements. 

1. Set $\mc{L}_0=\{P\in K[Y_1,Y_2] \mid  Y_1Y_2\;\text{divides}\; P\}$. Then, $\mc{F}_0$ is the set of ordered pairs of distinct elements from $\mc{L}_0$, well-ordered in some way; $\mc{L}_0= \{\mbf{P}_{01}, \mbf{P}_{02}, \ldots \}$ with $\mbf{P}_{0n}=(P_{0n1}, P_{0n2})$.  The first relations are $$\mc{R}(0n1)= X_{0n}^2- X_{0n}(Y_1+P_{0n1})$$ and, $$\mc{R}(0n2)= X_{0n}^2- X_{0n}(Y_2+ P_{0n2})\;,$$ for each element of $\mc{F}_0$. 

2. Set \begin{equation*}\begin{array}{ccc}\mc{L}_1&=&\{P\in K[Y_1,Y_2, \{X_{0n}\}_{n\in \N} \mid P\;\text{has 0 constant term, and any} \\&&\text{term without a factor}\; Y_1Y_2 \;\text{has a factor some $X_{0n}$}\}\end{array}\end{equation*} Then, $\mc{F}_1$ is the set of ordered pairs $(F,G)$ of distinct elements  from $\mc{L}_1$ such that at least one of $F$ and $G$ is not in $\mc{L}_0$. These are well-ordered in some way $\mc{F}_1= \{\mbf{P}_{11}, \mbf{P}_{12}, \ldots\}$ so that $\mbf{P}_{1n} =(P_{1n1}, P_{1n2})$. The relations are $$\mc{R}(1n1)= X_{1n}^2- X_{1n}(Y_1+P_{1n1})$$ and $$ \mc{R}(1n2)= X_{1n}^2- X_{1n}(Y_2+P_{1n2})\,,$$ for each element of $\mc{F}_1$. 

3. For $n>0$ assume that the sets $\mc{L}_0, \ldots, \mc{L}_{n-1}$, $\mc{L}_0\subsetneq \mc{L}_1 \subsetneq \cdots \subsetneq \mc{L}_{n-1}$,  and also the sets of pairs $\mc{F}_0,\ldots, \mc{F}_{n-1}$ have been constructed. Put 
\begin{equation*}\begin{array}{ccc}\mc{L}_n&=& \{P\in K[Y_1,Y_2, \{X_{0j}\}_{j\in\N}, \ldots, \{X_{n-1,j}\}_{j\in\N}] \mid P\;\text{has 0 constant}\\&& \text{term and any term without a factor $Y_1Y_2$ has some }\\&&\text{ $X_{ij}$ as a factor with}\;i=0,\ldots, n-1\}\,.\end{array} \end{equation*} 
Let $\mc{F}_n$ be the set of ordered pairs $(F,G)$ of distinct elements from $\mc{L}_n$ such that at least one of $F$ and $G$ is not in $\mc{L}_{n-1}$. These are well-ordered in some way $\mc{F}_n= \{\mbf{P}_{n1},\mbf{P}_{n2}, \ldots \}$ where $\mbf{P}_{nj}= (P_{nj1}, P_{nj2})$. The relations are $$\mc{R}(nj1)= X_{nj}^2- X_{nj}(Y_1+P_{nj1})$$ and $$\mc{R}(nj2)= X_{nj}^2- X_{nj}(Y_2+P_{nj2})\;,$$ for each element of $\mc{F}_n$. 

The ideal $I$ of $T$ is generated by $\bigcup_{n\in \N_0, j\in \N}(\mc{R}(nj1) \cup \mc{R}(nj2))$ and $\sqrt{I}$ is its radical. The ring $R= T/\sqrt{I}$, where for $P\in T$, $P+\sqrt{I}$ is written $p$. The ring $S$ is $R/J$ where $J= (y_1y_2)\cup (\bigcup_{ij\in \N_0\times \N} x_{ij})$, where for $p\in R$, $p+J$ is denoted $\bar{p}$. The ring $S$ is isomorphic to $K[Y_1,Y_2]/(Y_1Y_2)$, as will be seen.

It will first be shown that, for $ij \ne kl$, $X_{ij} -X_{kl}\notin \radi$.  If this is not the case, there is $m\in \N$ with $L=(X_{ij}-X_{kl})^m\in I$. Then, 
$$(*)\;\; L= \sum_{abc\in \N_0\times \N\times \{1,2\}} \mc{R}(abc)f_{abc}, \;\;f_{abc}\in T\;. $$

The idea is to isolate those terms of ($*$) which have a factor which is a variable other than $X_{ij}$ or $X_{kl}$. For each $f_{abc}$ write $f_{abc}'$ to be the sum of the terms of $f_{abc}$ which are not in $K[X_{ij},X_{kl}]$ and then $f_{abc}= f_{abc}'+ f_{abc}''$. Given the form of $L$ it follows that 
\begin{equation*}\begin{split}
(**)\;\; \left(\sum_{abc,\,ab\ne ij, kl} \mc{R}(abc)f_{abc}\right) + \mc{R}(ij1)f_{ij1}' +\mc{R}(ij2)f_{ij2}'\\ +\mc{R}(kl1)f_{kl1}' + \mc{R}(kl2)f_{kl2}'\\ -X_{ij}(Y_1+P_{ij1})f_{ij1} - X_{ij}(Y_2+ P_{ij2})f_{ij2}\\- X_{kl}(Y_1+P_{kl1})f_{kl1} - X_{kl}(Y_2+ P_{kl2})f_{kl2} \\ +X_{ij}^2(f_{ij1}'+f_{ij2}') + X_{kl}^2(f_{kl1}'+ f_{kl2}') =0
\end{split} \end{equation*} These are precisely the terms of ($*$) which have factors other than $X_{ij}$ and $X_{kl}$. It should be kept in mind that $P_{ijc}$ has no terms involving $X_{uv}$, where $u\ge i$, and similarly for $kl$. This means that 
$$L= X_{ij}^2f_{ij1}'' + X_{ij}^2f_{ij2}'' + X_{kl}^2f_{kl1}'' + X_{kl}^2f_{kl2}''\;. $$
Notice that $m=1$ cannot happen since the only appearance of $X_{ij}$ or $X_{kl}$ without other factors is as a square in $\mc{R}(ijc)$. It follows that, $m\ge 2$.
Hence, $f_{ij1}''$ has a term $kX_{ij}^{m-2}$ and $f_{ij2}''$ has a term $(1-k)X_{ij}^{m-2}$, for some $k\in K$. At least one of these is non-zero. This means that in ($*$) there are terms $kX_{ij}^{m-1}Y_1$ and $(1-k)X_{ij}^{m-1}Y_2$.  These terms occur nowhere else in ($*$), contradicting the equation ($**$). It is not necessary to look at $X_{kl}$. 

Thus, for $ij\ne kl$, $X_{ij}$ and $X_{kl}$ are distinct module $\radi$.  In a similar fashion it can be seen that no $X_{ij}$ is in $\radi$. 

Next, can an element  $P\in K[Y_1,Y_2]$ be in $\radi$?  The form of the relations shows that no power of $P$ can be in $I$. This shows that $R/(\{x_{ij}\mid ij\in \N_0\times \N\})$ is indeed the polynomial ring $K[Y_1,Y_2]$ and that $R/J\cong K[Y_2,Y_2]/(Y_1Y_2)$.

A lifting of the pair $(\ov{y_1}, \ov{y_2})$ to $R$ has the form $(y_1+ p_1, y_2+p_2)$, where $p_1$ and $p_2$ are images of polynomials $P_1$ and $P_2$ in $T$, with 0 constant term, and it may be assumed that $P_1\ne P_2$. The pair $(P_1,P_2)\in \mc{L}_n$, for some $n\ge 0$. As planned, the relations $\mc{R}(ijk)$ were chosen so that each lifting of the pair $(\ov{y_1}, \ov{y_2})$ to $R^2$ has a non-zero lower bound in $R$.  More exactly, if $(P_1,P_2) = (P_{nj1}, P_{nj2})$, then $x_{nj}$ is a non-zero lower bound of $p_1$ and $p_2$. Note that this also means that no such lifted pair has product 0.
\end{proof}

The next observation is about lifting a countable \rr-orthogonal set.

\begin{prop}\label{idemp}  Let $R$ be an \rr-good ring and $\phi\colon R\to S$ a surjective ring homomorphism which preserves \rr-infima of pairs of elements. Then, a countable set of pairwise \rr-orthogonal elements of $S$ can be lifted to a set of pairwise \rr-orthogonal elements of $R$. \end{prop}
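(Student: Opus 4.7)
The plan is to lift by induction on $n$. The base case takes $r_1$ to be any lift of $s_1$. For the inductive step, assume pairwise \rr-orthogonal lifts $r_1,\ldots,r_n$ of $s_1,\ldots,s_n$ are in hand, pick any lift $t\in R$ of $s_{n+1}$, and form the infima $k_i := r_i\wrr t$ (which exist since $R$ is \rr-good). Since $\phi$ preserves \rr-infima and $s_i\wrr s_{n+1}=0$, each $k_i$ lies in $K:=\ker\phi$, so setting
\[
r_{n+1} := t - k_1 - k_2 - \cdots - k_n
\]
gives $\phi(r_{n+1})=s_{n+1}$. (An entirely additive iteration $t^{(j)}=t^{(j-1)}+(r_j\wrr t^{(j-1)})$ in the style of Corollary~\ref{basic} is tempting, but small examples in $\mathbf{R}^3$ show that it can destroy an already-achieved \rr-orthogonality with an earlier $r_i$; the subtractive form appears to be the correct variant.)

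The heart of the proof is showing $r_j\wrr r_{n+1}=0$ for each $j\le n$. I would first establish a subtractive counterpart of Corollary~\ref{basic}: in any reduced ring, if $k=r\wrr s$ then $r\wrr(s-k)=0$. The argument parallels that of Corollary~\ref{basic}: supposing $0\ne k_2\rro r,s-k$, one combines $k_2(r-s+k)=0$ with $k(r-s)=0$ and uses the reduced-ring fact $xy=0\Leftrightarrow yx=0$ to derive $k_2 k^2=0$, hence $k_2 k=0$; this promotes $k_2$ to a common \rr-lower bound of $r$ and $s$, so $k_2\rro k$, and then $k_2^2=k_2 k=0$ forces $k_2=0$. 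For the multi-term case, one takes $u\rro r_j, r_{n+1}$ and strips the extraneous corrections $k_i$ ($i\ne j$) one at a time using a subtractive mini-lemma: in a commutative reduced ring, if $a\wrr b=0$, $a\wrr c=0$, and $c\rro b$, then $a\wrr(b-c)=0$, proved by multiplying $ub=u^2+uc$ on the right by $c$ and using $c^2=cb$ and $bc=cb$ (from $c\rro b$) to obtain $u^2 c=0$, hence $(uc)^2=0$ and $uc=0$ by reduction, so $u\rro b$ and therefore $u=0$. Iterating this mini-lemma reduces the multi-term question to the single-term identity $r_j\wrr(t-k_j)=0$ from the subtractive Corollary above.

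The main obstacle is validating the iterated strippings: at each step one must check that the mini-lemma's hypothesis $c\rro b$ continues to hold for the next $k_i$ to be subtracted from the currently corrected element. The pairwise \rr-orthogonality of the $k_i$'s among themselves (inherited from that of the $r_i$'s by transitivity through $k_i\rro r_i$ together with $r_i\wrr r_j=0$) combined with each $k_i\rro t$ provides enough structure to maintain this condition through all $n-1$ strippings, but the bookkeeping — especially confirming the \rr-below relations after successive subtractions — is the delicate part where the full strength of $R$ being \rr-good and $\phi$ preserving \rr-infima both come into play.
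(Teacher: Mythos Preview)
Your subtractive approach is genuinely different from the paper's, and the gap you flag at the end is real and not merely bookkeeping. To apply your mini-lemma with $a=r_j$, $b=t-k_j$, $c=k_i$ (for $i\ne j$), you need $k_i\rro t-k_j$, i.e.\ $k_i^2=k_i(t-k_j)$. Since $k_i^2=k_it$, this forces $k_ik_j=0$. You argue only that $k_i\wrr k_j=0$ (which does follow from $k_i\rro r_i$, $k_j\rro r_j$, $r_i\wrr r_j=0$), but \rr-orthogonality does not give product-zero in general, and having both $k_i,k_j\rro t$ does not obviously repair this in an arbitrary \rr-good ring. The same obstruction recurs at every subsequent stripping, so the induction does not close without an extra lemma of the form ``$k_1,k_2\rro t$ and $k_1\wrr k_2=0$ imply $k_1k_2=0$'', which you have not proved and which I do not see how to extract from the hypotheses.

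The paper sidesteps all of this by using the hypothesis that $\phi$ preserves \rr-infima in a much more direct way. Rather than taking a single lift $t$ of $s_{n+1}$ and correcting it, the paper invokes Proposition~\ref{lift} once for each $i\le n$ to produce separate lifts $t_i$ of $s_{n+1}$ with $t_i\wrr r_i=0$, and then sets
\[
r_{n+1}=(\cdots((t_1\wrr t_2)\wrr t_3)\cdots)\wrr t_n.
\]
Because $\phi$ preserves \rr-infima, $\phi(r_{n+1})=s_{n+1}\wrr\cdots\wrr s_{n+1}=s_{n+1}$, so $r_{n+1}$ is still a lift; and since $r_{n+1}\rro t_i$ for each $i$, one gets $r_{n+1}\wrr r_i\rro t_i\wrr r_i=0$ immediately. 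No additive or subtractive corrections, no product-zero questions, and no delicate order-maintenance are required. Your approach uses the infimum-preservation only to place the $k_i$ in $\ker\phi$; the paper exploits it to guarantee that an \rr-infimum of lifts is again a lift, which is where the real leverage lies.
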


\begin{proof} Suppose $\{s_1,s_2, \ldots \}$ is a set of pairwise \rr-orthogonal elements of $S$. Lift $s_1$ to $r_1\in R$. Assume, for $n\ge 1$ that $\{s_1, \ldots, s_n\}$ have been lifted to pairwise \rr-orthogonal elements $\{r_1, \ldots ,r_n\}$ in $R$, where $\phi(r_i) =s_i$. For $i=1,\ldots, n$, pick some $t_i\in R$ such that $\phi(t_i)= s_{n+1}$ and $t_i\wrr r_i=0$, which is possible by Proposition~\ref{lift}. Then set $r_{n+1} = (\cdots(t_1\wrr t_2)\wrr t_3) \cdots) \wrr t_n$.  By the assumption on $\phi$, $\phi(r_{n+1}) =   (\cdots(s_{n+1}\wrr s_{n+1})\wrr s_{n+1}) \cdots \wrr s_{n+1} = s_{n+1}$. By construction, for $i=1,\ldots, n$, $r_{n+1}\wrr r_i=0$ since $r_{n+1}\wrr r_i\rro t_i\wrr r_i=0$. 
\end{proof}

There are cases where Proposition~\ref{idemp} applies. For one such,  \cite[Proposition~2.1(iv)]{BR3} can be improved. It is not necessary to assume that $S$ is \rr-good as was done earlier.

\begin{prop} \label{wrr}  Let $R$ be a reduced ring and $I$ an ideal generated by idempotents.  Put $S=R/I$ and let $\phi\colon R\to S$ be the natural surjection. If $a\wrr b =c$ exists in $R$ then $\phi (c) = \phi(a)\wrr \phi(b)$. Moreover, if $R$ is \rr-good then $S$ will be \rr-good and, in addition, countable \rr-orthogonal sets in $S$ lift to \rr-orthogonal sets in $R$. 
\end{prop}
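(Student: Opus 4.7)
\medskip

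\noindent\textbf{Proof proposal.} My plan is to prove the first assertion---that $\phi(a \wrr b) = \phi(a) \wrr \phi(b)$ whenever $a \wrr b$ exists in $R$---and then derive the \emph{moreover} statements as formal consequences. Since $\phi$ is surjective, every pair in $S$ lifts to $R$, so once \rr-infima of pairs are preserved the \rr-goodness of $R$ transfers to $S$; and once \emph{all} pairs of elements of $R$ have \rr-infima preserved by $\phi$, the hypothesis of Proposition~\ref{idemp} is met and countable \rr-orthogonal sets in $S$ can be lifted.

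For the core claim, set $c = a \wrr b$. Because $\rro$ is preserved by any ring homomorphism between reduced rings, $\phi(c) \rro \phi(a), \phi(b)$ is automatic, so the content is maximality: given $d \in R$ with $d^2 - da \in I$ and $d^2 - db \in I$, I must show $d^2 - dc \in I$. My strategy is to perturb $d$ by a central idempotent in $I$ so that the perturbed element is an \rr-lower bound of $a$ and $b$ already in $R$, and then push the resulting $\rro c$ relation back through $\phi$.

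The key technical ingredient is that, because $R$ is reduced, every idempotent is central, and for central idempotents $e_1, e_2$ the element $e_1 + e_2 - e_1 e_2$ is an idempotent generating $e_1 R + e_2 R$. Iterating, any finite sum of $R$-multiples of idempotents in $I$ lies in a principal ideal generated by a single idempotent of $I$. Applied to the idempotent generators of $I$ used in expressing $d^2 - da$ and $d^2 - db$, this produces $e \in I$ with $e(d^2 - da) = d^2 - da$ and $e(d^2 - db) = d^2 - db$. Setting $d' = (1-e)d$, centrality of $1-e$ gives $(d')^2 = (1-e)d^2 = (1-e)da = d'a$, and similarly $(d')^2 = d'b$, so $d' \rro a,b$ and hence $d' \rro c$ in $R$. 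This reads $(1-e)(d^2 - dc) = 0$, from which $d^2 - dc = e(d^2-dc) \in I$, i.e., $\phi(d) \rro \phi(c)$, as needed.

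The main obstacle is the reduction from the (possibly many) idempotent generators of $I$ down to a single idempotent $e$ absorbing both $d^2 - da$ and $d^2 - db$; centrality of idempotents in the reduced ring $R$ makes this step elementary. After that the $(1-e)d$ substitution, the transfer of \rr-goodness to $S$, and the appeal to Proposition~\ref{idemp} for the countable lifting are all routine.
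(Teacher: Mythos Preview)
Your proposal is correct and follows essentially the same approach as the paper: both arguments lift $x\in S$ to $y\in R$, absorb $y^2-ya$ and $y^2-yb$ into idempotents of $I$, multiply $y$ by the complementary idempotent to obtain an \rr-lower bound of $a,b$ in $R$, and then push $\rro c$ back to $S$; the paper keeps two idempotents $e,f$ and multiplies by $g=(1-e)(1-f)$, while you first merge them into a single $e$ and multiply by $1-e$, which amounts to the same thing. Your write-up is in fact more explicit than the paper's on why an element of $I$ can be written as $er$ with $e\in I\cap\B(R)$, and the appeals to surjectivity and to Proposition~\ref{idemp} for the ``moreover'' clauses match the paper exactly.
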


\begin{proof}  Let $a,b\in R$ with $c=a\wrr b$. The aim is to show that $\phi(c)= \phi(a)\wrr \phi(b)$. Certainly $\phi(c) \rro \phi(a), \phi(b)$.   Suppose $x\rro \phi(a),\phi(b)$ and pick $y\in R$ with $\phi(y)=x$. Then, there are $r, r'\in R$ and $e,f\in I\cap \B(R)$ with $y^2-ya=er$ and $y^2-yb=fr'$.  Put $g= (1-e)(1-f)$. Then, direct calculation shows that $y^2g-yga = y^2g- ygb =0$ showing that $yg\rro a,b$ and, thus, $yg\le c$. However, $\phi(g)=1$ and $\phi(y)=x\rro \phi(c)$. Hence, $\phi(c)=\phi(a)\wrr \phi(b)$.  

The last statement follows since any pair in $S$ will lift to a pair in $R$ which has an \rr-infimum which is preserved by $\phi$ and Proposition~\ref{idemp} applies. \end{proof}

  \setcounter{section}{4} \setcounter{thm}{0}
 \noindent\tbf{4. The \rr-order and localizations.} This section looks at mostly commutative rings and the connections between the \rr-order and localizations. The results are mixed in that under some conditions the localizations behave well with respect to the \rr-order while under others the opposite is true. 

\begin{prop}\label{Sinv}  Let $R$ be an \rr-good ring and $S$ a multiplicatively closed set of central non-zero divisors in $R$. 

(i) The passage from $R$ to $RS\inv$ preserves \rr-infima if and only if for all $a,b\in R$ and $s\in S$, $as\wrr bs = (a\wrr b)s$.

(ii) The passage from $R$ to $RS\inv$ does not, in general, preserve \rr-infima in $R$.

(iii) If $R$ is wB then \rr-infima are preserved going from $R$ to $RS\inv$.\end{prop}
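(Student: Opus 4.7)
The plan is to treat the three parts in sequence. Parts (i) and (iii) reduce to direct calculations once one notes that multiplication by a unit preserves the \rr-order: if $u$ is a unit in a reduced ring and $x \rro y$, then $(xu)^2 = x^2 u^2 = (xy)u^2 = (xu)(yu)$, so $xu \rro yu$, and indeed multiplication by $u$ induces an \rr-order automorphism that carries infima to infima. In $RS\inv$ every $s/1$ with $s \in S$ is a unit, and because $S$ consists of central non-zero divisors, the canonical map $R \hookrightarrow RS\inv$ is injective and two fractions $x/s = y/t$ hold precisely when $xt = ys$ in $R$.

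For (i), the ``only if'' direction proceeds as follows. Assuming preservation, $(a/1) \wrr (b/1) = (a \wrr_R b)/1$ for all $a,b \in R$; applying this to $as, bs$ and, separately, multiplying $(a/1) \wrr (b/1)$ by the unit $s/1$, one obtains two expressions for $(as/1) \wrr (bs/1)$, which when equated give $as \wrr bs = (a \wrr b)s$ by injectivity. For ``if'', the workhorse is the translation $d/s \rro_{RS\inv} a/1 \iff d^2 = das$ in $R$ $\iff d \rro_R as$ (using that $s$ is a non-zero divisor to clear denominators). Given the hypothesis, any $d/s \rro a/1,\, b/1$ in $RS\inv$ produces $d \rro_R as,\,bs$, hence $d \rro_R (as \wrr bs) = (a \wrr b)s$; translating back yields $d/s \rro (a \wrr b)/1$, so $(a \wrr b)/1$ is indeed the infimum in $RS\inv$.

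For (iii), the computation is clean once one establishes the explicit form of infima in a wB ring: for $a, b \in R$, write $\ann(a-b) = fR$ with $f \in \B(R)$ (possible since $R$ is wB); then $a \wrr b = af = bf$. Verification: $af \rro a$ is direct, $af \rro b$ follows from $(a-b)f = 0$, and any $c \rro a, b$ satisfies $c(a-b)=0$, hence $c \in fR$, i.e.\ $c = cf$, from which $c \rro af$ is immediate. Now for $s \in S$ a central non-zero divisor, $\ann((a-b)s) = \ann(a-b) = fR$, so the same $f$ gives $as \wrr bs = (as)f = (af)s = (a \wrr b)s$, and part~(i) delivers the conclusion.

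Part (ii) is the main obstacle, since it demands an explicit counterexample. Given (iii), one must look outside the wB world; natural candidates are \rr-good but non-wB rings such as $\cx$ with $X$ locally connected but lacking enough clopen sets, where the flexibility of cozero supports should permit exhibiting $a, b \in R$ and a non-zero divisor $s$ with $as \wrr bs \ne (a \wrr b)s$, which by (i) witnesses the failure. The delicate step is pinning down a specific $X$ and triple $(a,b,s)$; the criterion from (i) makes precise exactly what needs to be exhibited and reduces the task to checking an identity in $R$ itself rather than in $RS\inv$.
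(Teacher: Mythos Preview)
Your treatments of (i) and (iii) are correct and are essentially the same as the paper's: the paper also invokes the fact that multiplication by a central unit is an \rr-order automorphism (citing \cite[Proposition~2.1(ii)]{BR3}) to run both directions of (i), and for (iii) it uses precisely the formula $a\wrr b = fa = fb$ with $fR=\ann(a-b)$, together with $\ann((a-b)s)=\ann(a-b)$ for $s$ a non-zero divisor. Your write-up even supplies the verification of that formula, which the paper simply cites.

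Part~(ii), however, is genuinely incomplete. The statement asserts that preservation fails \emph{in general}, which requires an explicit counterexample; you correctly identify that one must leave the wB setting and look at $\cx$ for a locally connected $X$, but you stop short of producing the triple $(a,b,s)$. Identifying the criterion from (i) is useful, but the content of (ii) is the example itself, and your proposal does not contain one.

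The paper's example is $R=\C(\R)$ with $S=\{h^n\mid n\in\N\}$, where $h\in\C(\R)$ has $\z(h)=\{0,1\}$ (so $h$ is a non-zero divisor). Take $f,g\in\C(\R)$ that coincide and are non-zero on $[0,1]$ but differ everywhere else. Then $f\wrr g=0$ in $R$, because any $0\ne k\rro f,g$ would force $f=g$ on $\cl(\coz k)$ and $f=g=0$ on the boundary of $\coz k$, which is impossible. Yet $fh$ and $gh$ now vanish at $0$ and $1$, so the function $k$ that agrees with $fh$ on $(0,1)$ and is zero elsewhere satisfies $0\ne k\rro fh,gh$; in fact $k=fh\wrr gh$. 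Thus $(f\wrr g)h=0\ne k=fh\wrr gh$, and by your part~(i) preservation fails. The key idea you are missing is to choose $h$ so that multiplication by $h$ \emph{creates} new zeros at the endpoints of the interval where $f$ and $g$ agree, thereby manufacturing a non-trivial common lower bound that did not exist before.
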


\begin{proof} (i) Suppose first that if $r = a\wrr b$ in $R$, for all $t\in S$, $rt= at\wrr bt$.  If $ds\inv \rro a,b$ in $RS\inv$ then $d\rro as, bs$ in $R$.  Hence, $d\rro rs$ and $ds\inv \le r$  in $RS\inv$ by \cite[Proposition~2.1(ii)]{BR3}, which says that central units distribute over \rr-infima.   

Suppose now that \rr-infima are preserved and that $r=a\wrr b$ in $R$ and in $RS\inv$.  By \cite[Proposition~2.1(ii)]{BR3} again, $rs\inv = as\inv \wrr bs\inv$ and $rs= as\wrr bs$ in $RS\inv$ for all $s\in S$, and this will be true in the subring $R$.

(ii) There is an example using $R = \C(\R)$ and $S=\{h^n \mid n\in \N\}$ where $h\in \C(\R)$ is such that $\z(h) = \{0,1\}$. Note that $h$ is a non-zero divisor. Let $f, g\in \C(\R)$ such that $f$ and $g$ are different everywhere except on the interval $[0,1]$, where they coincide and are non-zero. By construction $f\wrr g=0$ (see Item (11) in Definitions and Terminology). However, $fh \wrr gh \ne 0$. In fact, $fh\wrr gh=k$ where $k$ coincides with $fh$ and $gh$ on the interval $(0,1)$ and $\coz k=(0,1)$. Then, $0\ne kh\inv \rro f,g$ in $RS\inv$. The conclusion follows by (i).  Note that, here, $R$ is \rr-good.

(iii) By the proof of \cite[Theorem~2.6]{BR3}, for $a,b\in R$, $a\wrr b =ea=eb$, where $e=e^2$ and $eR=\ann(a-b)$. For $s\in S$, $\ann (as-bs) =\ann (a-b) =eR$, since $s$ is a non-zero divisor. Hence, $as\wrr bs= eas=ebs$ showing that $as\wrr bs= (a\wrr b)s$.    \end{proof}

\begin{cor} \label{intS}  Suppose $R$ is a commutative reduced ring and $S\sbq R$ a multiplicatively closed set of non-zero divisors. If $R$ is \rr-good and integrally closed in $RS\inv$ then $RS\inv$ is  \rr-good. \end{cor}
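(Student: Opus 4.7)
The plan is to exploit the key observation (already used in the proof of Proposition~\ref{intcl}) that an element $x$ with $x\rro a$ is automatically a root of the monic polynomial $X^2-aX\in R[X]$; this makes integral closedness a natural vehicle for transferring \rr-relations between $R$ and $RS\inv$.

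First I would note that $RS\inv$ is reduced, which is standard because $R$ is a commutative reduced ring and $S$ consists of central non-zero divisors. Next, I would reduce to the case where the two elements whose \rr-infimum is being sought lie in $R$. Any two elements of $RS\inv$ can be written over a common denominator $u\in S$, and by \cite[Proposition~2.1(ii)]{BR3} central units distribute over \rr-infima; so if $a_1s_1\inv,a_2s_2\inv\in RS\inv$ and $u=s_1s_2$, then $a_1s_1\inv\wrr a_2s_2\inv=u\inv(a_1s_2\wrr a_2s_1)$ whenever the right-hand infimum exists in $RS\inv$. Hence it is enough to show that for every $a,b\in R$ the infimum $a\wrr b$ exists in $RS\inv$.

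The candidate for the infimum is $c=a\wrr b$ computed in $R$, which exists because $R$ is \rr-good. Since the inclusion $R\hookrightarrow RS\inv$ is a homomorphism of reduced rings it preserves the \rr-order, so $c\rro a,b$ in $RS\inv$. For universality, suppose $x\in RS\inv$ satisfies $x\rro a,b$. Then $x^2=xa$, so $x$ is a root of the monic polynomial $X^2-aX\in R[X]$, i.e.\ $x$ is integral over $R$. By the standing hypothesis that $R$ is integrally closed in $RS\inv$, this forces $x\in R$. But then $x\rro a,b$ in $R$ gives $x\rro c$ in $R$, and therefore in $RS\inv$ as well. Thus $c=a\wrr b$ in $RS\inv$, and $RS\inv$ is \rr-good.

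There is no real obstacle here: the only place the hypothesis is genuinely needed is the integral-closedness step pulling $x$ from $RS\inv$ back into $R$, and that is precisely the point where the monic equation $X^2-aX$ arising from the definition of $\rro$ does the work, exactly as in Proposition~\ref{intcl}.
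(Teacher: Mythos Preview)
Your proof is correct and follows essentially the same route as the paper's: both use the monic equation $X^2-aX$ coming from $x\rro a$ together with integral closedness to pull \rr-lower bounds from $RS\inv$ back into $R$, and then use distribution of central units over \rr-infima (\cite[Proposition~2.1(ii)]{BR3}) to pass from pairs in $R$ to arbitrary pairs in $RS\inv$ via a common denominator. The paper phrases the first step as verifying that Proposition~\ref{Sinv}(i) applies, but the underlying argument is the same as yours.
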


\begin{proof} First note that Proposition~\ref{Sinv}~(i) applies since, for $a,b\in R$,  if $cu\inv \rro a,b$ in $RS\inv$, for some $u\in S$, then $cu\inv$ satisfies $x^2-xa=0$ (and $x^2-xb=0$), showing that $cu\inv \in R$. Now consider $as\inv, bt\inv \in RS\inv$. Put $d=at\wrr bs$ in $R$; this is also the \rr-infimum in $RS\inv$.  But then, $ds\inv t\inv = as\inv \wrr bt\inv$ since $st$ is now a unit.    \end{proof}

 \begin{ex}\label{nogood}  There is  a commutative, reduced ring $R$ which is not \rr-good such that $\qcl(R)$ is \rr-good. \end{ex}
 
 \begin{proof} Fix a prime integer $p$ and let $R$ be the ring of sequences from $\Z$ which are eventually constant modulo $p$. This ring is not \rr-good (\cite[Example~2.12]{BR3}). However, $\qcl(R) = \prod_{n\in \N}\Q = S$. This is because $R\sbq \prod_{n\in \N}\Z \sbq S$ showing that $\qcl(R)\sbq S$. Moreover, $(p,p,\ldots)\in R$ showing that $(1/p,1/p, \ldots) \in \qcl(R)$. For any sequence of integers $(n_1, n_2, \ldots)$, $(n_1p, n_2p, \ldots) \in R$ giving $(n_1,n_2, \ldots )\in \qcl(R)$. This gives $\prod_{n\in \N}\Z\sbq \qcl{R}$ and, hence, all of $S$ in $\qcl(R)$.  \end{proof}
 
 The ring $\cx$ of \cite[Theorem~3.5(2)]{BR3} is another example since $\qcl(\cx)$ is regular.
 
 Notice also that if $A$ is the \rr-good ring of sequences of integers which are eventually constant, then $Q(A)$, the complete ring of quotients is regular, in fact $Q(A) =\prod_{n\in \N}\Q$. However, there is the non-\rr-good ring $R$ of Example~\ref{nogood} lying between $A$ and $Q(A)$. 

In the example found in the proof of Proposition~\ref{Sinv}~(ii) it can be noted that $\C(\R)$ is \rr-good as is its classical ring of quotients, $\qcl(\R)$, which is even von Neumann regular. However, the passage from $\C(\R)$ to $\qcl(\R)$ does not preserve \rr-infima.  

The next remark shows that localization at a prime need not preserve \rr-infima.

\begin{remark}\label{RsupP} Let $R$ be a commutative reduced ring and $\mf{p}$ a prime ideal of $R$. Then the passage from $R$ to $R_{\mf{p}}$ need not send \rr-infima to \rr-infima.\end{remark}

\begin{proof} Again take $R = \C(\R)$ and $\mf{m}=\{ f \in \C(\R)\mid f(0)=0\}$, a maximal ideal. Put $T = \C(\R)_{\mf{m}}$ and let $\psi\colon \C(\R) \to T$ be the localization map with kernel $K= \{g\in \C(\R) \mid \exists f\notin \mf{m}\;\text{with}\; fg=0\}$. For $h\in \C(\R)$ denote $\psi(h) = \bar{h}$. Consider $h, k\in \C(\R)$ such that $h\wrr k=0$ but such that there is a neighbourhood $N$ of 0 where $h$ and  $k$ coincide and are non-zero. Then, $h, k\notin \mf{m}$ and $\bar{h}$ and $\bar{k}$ are invertible in $T$. Moreover, $h-k\in K$ so that $\bar{h}=\bar{k}$ in $T$. Hence, $\bar{h}\wrr \bar{k} = \bar{h} = \bar{k} \ne 0$ while $h\wrr k=0$.   (As examples, the following functions may be used for $h$ and $k$. Let $h$ be $-x$ on $(-\infty, -1)$, $x$ on $(1, \infty)$ and constantly 1 on $[-1,1]$. Let $k$ be $-2x+1$ on $(-\infty, -1)$, $2x -1$ on $(1, \infty)$ and constantly 1 on $[-1,1]$.)\end{proof}

The following lemma will be used in the example below. 

\begin{lem} \label{Bvide}  Let $R$ be an indecomposable reduced ring. Suppose $u$ is a central unit and $b\in R$ arbitrary. If $c\rro u,b$, then $c=0$ or $c=u=b$. In any case, $u\wrr b$ exists. \end{lem}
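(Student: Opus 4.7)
The plan is to reduce the situation to an idempotency statement by exploiting that $u$ is invertible, and then invoke indecomposability to force $c$ into one of the two announced forms.

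First I would introduce the element $e = cu\inv$, which is well-defined since $u$ is central. Using $c \rro u$, that is $c^2 = cu$, a short computation gives $e^2 = c^2 u^{-2} = cu \cdot u^{-2} = cu\inv = e$, so $e$ is an idempotent of $R$. By item~(10) of the definitions, idempotents of a reduced ring are central, and since $R$ is indecomposable, $\B(R) = \{0,1\}$. Hence $e = 0$ or $e = 1$, giving $c = 0$ or $c = u$ respectively.

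In the second case I would then use the hypothesis $c \rro b$. From $c = u$ we obtain $u^2 = ub$, and cancelling the unit $u$ yields $b = u$. This gives the claimed dichotomy $c = 0$ or $c = u = b$.

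Finally, for the existence of $u \wrr b$, I would split into two cases. If $b = u$, then $u$ itself is an \rr-lower bound of $u$ and $b$, and any other \rr-lower bound $c$ satisfies $c \rro u$, hence $c \rro u = u \wrr b$ by the preceding discussion; so $u \wrr b = u$. If $b \ne u$, then the second option in the dichotomy is excluded, so every \rr-lower bound of $u$ and $b$ is $0$, whence $u \wrr b = 0$. No step is really an obstacle here; the only delicate point is the standing appeal to centrality of idempotents in a reduced ring, which is what lets indecomposability be applied directly to $e = cu\inv$.
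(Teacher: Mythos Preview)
Your proof is correct and follows essentially the same approach as the paper: form the idempotent $cu^{-1}$, use indecomposability to force it to be $0$ or $1$, and then read off $b=u$ from $c\rro b$ in the second case. The only difference is that you spell out the final case split for the existence of $u\wrr b$, whereas the paper simply writes ``The conclusion follows.''
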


\begin{proof} Suppose $c\rro u,b$ which implies that $c^2-cu=c^2- cb=0$. From this, $c^2u^{-2} =cu\inv$, an idempotent. Thus, $cu\inv =0$ or $cu\inv =1$, or, $c=u$ or $c=0$. If $c=u$ then $u^2-ub=0$ implies that $c=u=b$, as well. The conclusion follows.
 \end{proof}

\begin{ex} \label{Rbarnot}  There is an example of a commutative reduced \rr-good ring $R$ and a multiplicatively closed set $S\sbq R$ not containing 0 such that the image of $R$ is not \rr-good and, moreover, $RS\inv$ is also not \rr-good. Hence, a  reduced commutative local ring is not necessarily \rr-good.\end{ex}

\begin{proof} Let $R = \C(\R^2)$, an indecomposable \rr-good ring.  Consider the fixed maximal ideal $\mf{m} =\mf{m}_\bz= \{f\in R\mid f(\bz) =0\}$, where $\bz =(0,0)$.  (Any $\mf{m}_p$ would work equally well.) Then the kernel of the localization at $\mf{m}$ is $K=\{g\in R\mid \z (g) \;\text{is a neighbourhood}\text{ of $\bz$}\}$. (This ideal is called $\mc{O}_\bz$ in \cite[4I]{GJ}). For $f\in R$, $f+K$ will be written $\bar{f}$. 

It is first necessary to say what it means for $\bar{h}\rro \bar{f}, \bar{g}$. This is equivalent to saying that $h^2-hf$ and $h^2-hg$ are in $K$, or, using the fact that $S = R\setminus \mf{m}$ is multiplicatively closed, that there is $l\notin \mf{m}$ such that $(h^2-hf)l = (h^2-hg)l = 0$.  

Lemma~\ref{Bvide} can be used to show that any example of $f,g\in R$ where $\bar{f}$ and $\bar{g}$ have no \rr-infimum in $\bar{R}$ or $RS\inv$ will have to be elements of $\mf{m}$.

Two functions will now be defined, by descriptions of their graphs, so that their images in $R/K$ and in $R_\mf{m}$  do not have an \rr-infimum. For $n\in \N$ consider rays $\rho_n$ in the first quadrant of $\R^2$ starting at $\mbf{0}$ having slope $n$ and the positive vertical axis $\rho_\infty$. The open region between $\rho_n$ and $\rho_{n+1}$ will be denoted $\mc{S}_n$. The functions $f$ and $g$ will be zero on the rays $\rho_n$ and $\rho_\infty$, in the second, third and fourth quadrants and in the region in the first below $\rho_1$. 

For each $n\in \N$, consider the closed disk $B_n$ of radius $1/n$ with centre $\mbf{0}$.  In $B_n\cap \mc{S}_n$, $f$ and $g$ will coincide. For $0<r\le 1/n$ consider the points $p$ and $q$ on $\rho_n$ and $\rho_{n+1}$, respectively, with $\Vert p\Vert = \Vert q \Vert = r$ and the line segment joining $p$ and $q$. The graphs of both $f$ and $g$ over this line segment are semicircles of radius $r$.  Let $p\in \mc{S}_n$ with $\Vert p\Vert =1/n$ and $q$ on the ray  from $\mbf{0}$ through $p$ with $\Vert q\Vert > 1/n$. Then, $f(q) = f(p)$ while $g(q) = g(p)(1/(\Vert q\Vert n)) = f(p)(1/(\Vert q\Vert n))$. Both $f$ and $g$ on $\mc{S}_n$ are bounded by $1/n$, necessary for continuity on the ray $\rho_\infty$. They coincide at $p$ if $\Vert p\Vert \le 1/n$ but they differ if $\Vert p\Vert > 1/n$.  

 For $n\in \N$, let $h_n\in R$ be the function which coincides with $f$ on $\mc{S}_n$ and is zero elsewhere. Then, $h_n\in R$ since $f$ is 0 on the boundary of $\mc{S}_n$. For $r>0$, let $D_r$ be the open disk around $\mbf{0}$ with radius $r$. Recall that each open set in $\R^2$ is the cozero set of some continuous function. If $k\in R$ has cozero set $D_{1/n}$ then $(h_n^2-h_nf)k = (h_n^2-h_ng)k =0$. This shows that $\ov{h_n}\rro \bar{f}, \bar{g}$ and that $\ov{h_n}\ne \bar{0}$, since $\bar{k}$ is a non-zero divisor in $R/K$. Hence, $\bar{f}$ and $\bar{g}$ have non-zero \rr-lower bounds in $R/K$.  

Now suppose that $\bar{h} = \bar{f} \wrr \bar{g}$, for some $h\in R$. For each $n\in \N$, $\ov{h_n}\rro \bar{h}$. There is $r>0$  and $l\in R$ with cozero set $D_r$ such that $(h^2-hf)l = (h^2-hg)l =0$. Multiplying these equations by $l$ shows that $hl\rro fl, gl$, in $R$. This means that for $p\in \R^2$, $\Vert p\Vert<r$, $h(p) =0$ or $h(p) = f(p)=g(p)$. Pick $n\in \N$ with $1/n<r$. There exists $0<r'<1/n$ and $u\in R$ with $\coz u=D_{r'}$ such that $h_nu\rro fu, gu$ and $h_nu\rro hu$. Hence, for $q\in \mc{S}_n$, $0< \Vert q\Vert <r'$, $h_n(q) = h(q)= f(q) =g(q)$. With $p\in \mc{S}_n$ and $1/n <\Vert p\Vert <r$, $hl(p) = 0$ or $hl(p) = fl(p)= gl(p)$. However, here $fl(p)\ne gl(p)$ showing that $hl(p) =0$ and $h(p)=0$. 

Let $L$ be the line segment (in $\mc{S}_n$) connecting $p$ and $q$. Then $h\res_L(q) =f\res_L(q)\ne 0$ and $h\res_L(p) =0$ and $\coz h_L$ is open in $L$. Let $a$ be in the boundary of $\coz h_L$ in $L$ and $\{b_m\}$ a sequence in $\coz h_L$ converging to $a$. Each $h(b_m) = f(b_m)$ while $\{h(b_m)\} \to 0$ and $\{f(b_m)\}\to f(a) \ne 0$. This is a contradiction showing that $\bar{f}$ and $\bar{g}$ do not have an \rr-infimum in $R/K$.  

The next step is to show that $\bar{f}$ and $\bar{g}$ have no \rr-infimum in $R_\mf{m}$. Suppose that $\bar{a}\bar{b}\inv = \bar{f}\wrr \bar{g}$ in $R_\mf{m}$, $a\in R$ and $b\in R\setminus \mf{m}$. By \cite[Proposition~2.1(ii)]{BR3}, one more time,  $\bar{a} = \bar{f}\bar{b} \wrr \bar{g}\bar{b}$ in $R_\mf{m}$ and, also, in the subring $R/K$. For some $r>0$ and $l\in R$ with $\coz l=D_r$, $al \rro fbl, gbl$. By reducing $r$, if necessary, it may be assumed that $D_r\sbq \coz b$.  For all $n\in \N$ with $1/n<r$, it is clear that $\ov{h_n}\bar{b}\rro \bar{a}$.  The argument used in $R/K$ can be applied to these $h_nb$ and $a$ to again arrive at a contradiction. That is, for $1/n<r$, and $p\in \mc{S}_n$ with $\Vert p\Vert <1/n$, $h_nb(p) = fb(p) =gb(p) =a(p)\ne 0$, while for $q\in \mc{S}_n$, $1/n <\Vert q\Vert <r$, $a(q)=0$ and $fb(q)\ne 0$. Hence, $R_\mf{m}$, a commutative, reduced local ring,  is not \rr-good. \end{proof}

Notice, by the way, in the situation of the proof of Example~\ref{Rbarnot}, that in $R$, $f\wrr g=0$.   \\[-.5ex]

  \setcounter{section}{5} \setcounter{thm}{0}
\noindent\tbf{5. The \rr-order and the Pierce sheaf.}
This section looks at a characterization of rr-good rings in terms of the Pierce sheaf. There are interesting and suggestive necessary conditions but they turn out not to be sufficient. The complete set of conditions is hard to test for.

Information about the Pierce sheaf can be found in \cite[\tbf{V}~2]{J}. Here are a few basic elements. The sheaf for a ring $R$ has base space $\mc{X}= \spec\B(R)$, a boolean space (compact, Hausdorff and totally disconnected). The Pierce stalks are defined as follows: for $x\in \mc{X}$, $R_x= R/xR$. For $r\in R$, $r+Rx$ is written $r_x$. In the case where $R$ is reduced, so are all the Pierce stalks and $\B(R_x)$ has no non-trivial idempotents, i.e, the Pierce stalks are indecomposable. The topology on the \emph{espace \'etal\'e} is generated by the sets: for $r\in R$ and $Y$ a clopen set in $\mc{X}$, $\{r_y\mid y\in Y\}$. The ring of sections of the sheaf is isomorphic to $R$. The support in $\mc{X}$ of an element of $R$ is closed in $\mc{X}$. The key fact to be used is that if $r,s\in R$ coincide at $x\in \mc{X}$, they also coincide on a clopen set containing $x$. The sections of the sheaf are identified with the elements of $R$. For $r\in R$, $\supp r = \{x\in \mc{X}\mid r_x\ne 0_x\}$ and its complement is written $\z(r)$. 

Recall (\cite[Proposition~2.5]{BR3}) that a ring $R$ is awB if and only if its Pierce stalks are domains.

The following two results can be considered as corollaries of Lemma~\ref{Bvide} which dealt with indecomposable rings.

  \begin{cor} \label{unitwedge}  Let $R$ be a reduced ring and $u, b\in R$ where  $u$ is a central unit. Then, $u\wrr b$ exists if and only if $U=\{x\in \spec\B(R) \mid u-b \in Rx\}$ is clopen in $\spec\B(R)$. \end{cor}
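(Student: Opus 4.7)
The plan is to work stalkwise via the Pierce sheaf and reduce to Lemma~\ref{Bvide}, which already controls $\rr$-infima involving a unit in an indecomposable ring. For every $x \in \spec\B(R)$, the image $u_x$ is a central unit in the indecomposable stalk $R_x$, so Lemma~\ref{Bvide} guarantees that $u_x \wrr b_x$ exists in $R_x$ and equals $u_x = b_x$ when $x \in U$ and $0$ otherwise. Thus $U$ records exactly the points at which the stalkwise infimum is nonzero, and the question becomes whether these local data glue to a global infimum in $R$.

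For the sufficiency direction, I would assume $U$ is clopen, take the idempotent $e \in \B(R)$ with $\supp e = U$, and verify that $c = eu$ serves as $u \wrr b$. The relations $c \rro u$ and $c \rro b$ are immediate, with $c \rro b$ boiling down to $eu(u-b) = 0$, a stalkwise vanishing that splits into the two cases $x \in U$ (where $u-b$ dies) and $x \notin U$ (where $e$ dies). For maximality, any $d \rro u,b$ satisfies $d_x \rro u_x, b_x$ in every stalk; Lemma~\ref{Bvide} forces $d_x \in \{0, u_x\}$, with the latter option occurring only when $x \in U$, and so $d_x \rro c_x$ stalkwise. Since $R$ embeds in the product of its Pierce stalks, this upgrades to $d \rro c$ in $R$.

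For the necessity direction, assume $c = u \wrr b$ exists. Lemma~\ref{Bvide} applied stalkwise to $c_x \rro u_x, b_x$ instantly gives $c_x = 0$ for $x \notin U$. The main obstacle is to rule out $c_x = 0$ at points of $U$, i.e.\ to transfer the existence of the global infimum into the stalk. Here I would use the key Pierce-sheaf feature that $u$ and $b$, coinciding at $x \in U$, must coincide on some clopen neighborhood $V \ni x$; letting $f \in \B(R)$ correspond to $V$, one has $fu = fb$, from which a short check shows $fu \rro u, b$, hence $fu \rro c$. Passing the equality $(fu)^2 = fuc$ to the stalk at $x$ yields $u_x^2 = u_x c_x$, and cancelling the unit $u_x$ forces $c_x = u_x \ne 0$. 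This identifies $U$ with $\supp c$, which is always closed, while the containment $V \subseteq U$ emerging from the same argument shows $U$ is open. Therefore $U$ is clopen.
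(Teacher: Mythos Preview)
Your proof is correct and follows the same Pierce-sheaf strategy as the paper, applying Lemma~\ref{Bvide} stalkwise to control lower bounds and then matching $U$ with $\supp c$. The only tactical difference is that the paper first reduces to $u=1$ via the substitution $b \mapsto u^{-1}b$, after which any $c \rro 1$ is automatically an idempotent (so $\supp c$ is clopen from the start), slightly shortening both directions.
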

 
 \begin{proof}
 For the computation of $u\wrr b$ it suffices to consider $1$ and $b$, by replacing $b$ by $u\inv b$. (\cite[Proposition~2.1(ii)]{BR3} is used again.) Suppose $c=1\wrr b$ exists. Then, since $c\le 1$, $c\in \B(R)$. For $x\in \spec \B(R)$, it follows that $c_x=0_x$ or $c_x=1_x$. In the latter case, $c_x=1_x=b_x$. Hence, $\supp c\sbq U$.  Since $U$ is open and $\supp c$ is closed (as well as open), $V=U\setminus \supp c$ is open. If $V\ne \vide$   then there is $0\ne e\in \B(R)$ with $\supp e \sbq V$ and, also, $e^2 =e=eb$. It follows that $e\rro 1, b$ and $e\rro c$, a contradiction unless $V=\vide$. Hence, $U=\supp c$ and is clopen.
  
 In the other direction, if $U$ is clopen with $c$ the idempotent with support $U$ then $c =cb$.  If $e\in \B(R)$ is such that $e\rro 1,b$ then the support of $e$ is contained in $U$ making $e\rro c$. 
 
In the case that $U=\vide$ then $1\wrr b=0$. \end{proof}

Notice that in Corollary~\ref{unitwedge} the set $U$ is, in fact, $\z(u-b)$. The next corollary recalls \cite[Theorem~2.6]{BR3} but, here, the Pierce stalks do not need to be domains. 
 
  \begin{cor} \label{PiercNec}  Let $R$ be a reduced ring which is \rr-good. Then, the support in the Pierce sheaf of each $a\in R$ is clopen and each Pierce stalk $R_x$ is \rr-good. 
 \end{cor}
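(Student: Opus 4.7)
The statement has two parts. The first is that $\supp a$ is clopen in $\mc{X} = \spec\B(R)$ for each $a \in R$. The second is that each Pierce stalk $R_x$ is \rr-good. My plan is to treat the first part as essentially a one-line consequence of Corollary~\ref{unitwedge}, and to handle the second part by transferring \rr-infima from $R$ to the stalks via the standard description of $Rx$.

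For the first part, I would apply Corollary~\ref{unitwedge} with $u = 1$ (a central unit) and $b = 1-a$. Since $R$ is \rr-good, the infimum $1 \wrr (1-a)$ exists, so the corollary gives that $U = \{x \in \mc{X} \mid 1-(1-a) \in Rx\} = \z(a)$ is clopen; hence its complement $\supp a$ is clopen. This handles the first assertion with no additional work.

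For the second part, fix $x \in \mc{X}$ and consider two elements of $R_x$, necessarily of the form $a_x, b_x$ with $a,b \in R$. Let $c = a \wrr b$ in $R$. Since the \rr-order is preserved by ring homomorphisms, $c_x \rro a_x, b_x$. The nontrivial step is to check that $c_x$ is in fact the \emph{greatest} lower bound in $R_x$. Suppose $d \in R$ satisfies $d_x \rro a_x, b_x$, so that $d^2 - da$ and $d^2 - db$ lie in $Rx$. Here I would invoke the standard Pierce-sheaf fact that an element $r \in R$ belongs to $Rx$ iff $gr = 0$ for some $g \in \B(R) \setminus x$ (a consequence of the ideal $x \subseteq \B(R)$ being closed under finite joins in the boolean ring $\B(R)$). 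Choosing such idempotents $g_1,g_2 \not\in x$ for the two equations and setting $g = g_1 g_2$, which remains outside the prime $x$, a direct computation gives $(gd)^2 = g d^2 = gda = (gd)a$ and similarly $(gd)^2 = (gd)b$, so $gd \rro a,b$ in $R$. Therefore $gd \rro c$, and passing to $R_x$, where $g_x = 1$, gives $d_x \rro c_x$.

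I do not expect any serious obstacle: both halves are short. The only technical point worth stating carefully is the characterization of $Rx$ by idempotent annihilators outside $x$, which is what lets a lower bound in the stalk be promoted to a lower bound in $R$ (after multiplication by a suitable idempotent). Everything else is either a direct appeal to Corollary~\ref{unitwedge} or a routine verification that $(gd)$ is an \rr-lower bound of $a$ and $b$ in $R$.
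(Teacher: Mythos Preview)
Your proposal is correct and follows essentially the same route as the paper. For the first part you apply Corollary~\ref{unitwedge} with $u=1$, $b=1-a$ exactly as the paper does (though the paper also spells out the identification $\z(a)=\supp c$ in detail); for the second part the paper simply cites Proposition~\ref{wrr}, whose proof is precisely the idempotent-lifting argument you wrote out inline.
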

 
 \begin{proof} Corollary~\ref{unitwedge} will be used. For each $a\in R$, consider $c = 1\wrr (1-a)$. Then $c\in \B(R)$ and $\supp (1-a)$ is a clopen set since, by Corollary~\ref{unitwedge}, $\z(1-a)$ is clopen. If, for $x\in \mc{X}$, $c_x= 1_x$ then $c=c(1-a)$ which says that $a_x=0_x$.  On the other hand, if $a_x=0_x$ then there is a clopen neighbourhood $N$ of $x$ such that for all $y\in N$, $a_y=0_y$. Let $e_N\in \B(R)$ be such that $\supp e_N =N$. Then, $e_N =e_N(1-a)$ and, hence, $e_N\rro c$, which shows that $c_x=1_x$. In other words,  $\z(a) = \supp c$ is a clopen set in $\mc{X}$, showing that $\supp a$ is clopen.
 
 The Pierce stalks are \rr-good by Proposition~\ref{wrr}.
 \end{proof}
 
The conditions of Corollary~\ref{PiercNec} are not sufficient as will be shown by the next lemma and an example. A characterization of \rr-good rings will follow.

\begin{lem} \label{seqinf}  Let $A$ be an \rr-good indecomposable ring and $B$ an \rr-good subring. Suppose that there are $b_1,b_2\in B$ such that $b_1\wrr b_2 =c$ when calculated in $B$ and $b_1\wrr b_2 =d$ when calculated in $A$. Assume further $c\ne d$. Then, when $R$ is the ring of sequences from $A$ which are eventually constant in $B$,  the ring $R$ satisfies the conditions of Corollary~\ref{PiercNec} but is not \rr-good.
\end{lem}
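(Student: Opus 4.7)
The plan is to describe the Pierce sheaf of $R$ explicitly and then exhibit a pair of elements whose \rr-infimum cannot exist. Since $A$ is indecomposable, so is its subring $B$, and every idempotent of $R$ is a $\{0,1\}$-valued sequence that is eventually constant; hence $\B(R)$ is the Boolean algebra of finite-or-cofinite subsets of $\N$, whose Stone space $\mc{X}=\spec\B(R)$ is the one-point compactification $\N\cup\{\infty\}$ of a countable discrete set. A direct computation of the quotients gives $R_n=A$ for each $n\in\N$ (quotient by the idempotent with support $\N\setminus\{n\}$) and $R_\infty=B$ (quotient by the ideal of eventually-zero sequences); both stalks are \rr-good by hypothesis.

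For the support condition, let $r=(a_1,\ldots,a_N,b,b,\ldots)\in R$. If $b=0$ then $\supp r\sbq\{1,\ldots,N\}$ consists of isolated points of $\mc{X}$, hence is clopen. If $b\ne 0$, then $\supp r$ contains the clopen neighbourhood $\{N{+}1,N{+}2,\ldots\}\cup\{\infty\}$ of $\infty$ and differs from it by the finite (clopen) set $\{n\le N:a_n\ne 0\}$, so it is clopen in either case. Thus both necessary conditions of Corollary~\ref{PiercNec} are met.

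To rule out \rr-goodness, consider the constant sequences $r_1=(b_1,b_1,\ldots)$ and $r_2=(b_2,b_2,\ldots)$ and suppose, for contradiction, that $s=r_1\wrr r_2$ exists in $R$. From $s\rro r_1,r_2$, for every $n\in\N$ one has $s_n\rro b_1,b_2$ in $A$, whence $s_n\rro d$. Conversely, for each $k\in\N$ let $u_k\in R$ be the sequence with value $d$ in coordinate $k$ and $0$ elsewhere (eventually $0\in B$, so $u_k\in R$). Since $d\rro b_1,b_2$ in $A$, one has $u_k\rro r_1,r_2$, and therefore $u_k\rro s$; reading at position $k$ gives $d^2=ds_k$, i.e.\ $d\rro s_k$ in $A$. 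Antisymmetry of the \rr-order in the reduced ring $A$ then forces $s_k=d$ for every $k\in\N$. Because $s\in R$ is eventually constant with eventual value in $B$, this forces $d\in B$. But then the equations $d^2=db_1=db_2$ (from $d=b_1\wrr b_2$ in $A$) are identities between $B$-elements and so hold in $B$, making $d$ a lower bound of $b_1,b_2$ in $B$; hence $d\rro c$. Conversely, $c\rro b_1,b_2$ in $B$ gives $c\rro b_1,b_2$ in $A$, and the \rr-goodness of $A$ gives $c\rro d$ in $A$, again an equation between $B$-elements. Antisymmetry then yields $c=d$, contradicting the hypothesis.

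The main obstacle is recognising the right family of lower bounds: the ``point-mass'' sequences $u_k$ force the candidate infimum to take the value $d$ at every $\N$-coordinate and thereby push $d$ into the subring $B$, exposing the mismatch between $b_1\wrr b_2$ computed in $A$ and in $B$.
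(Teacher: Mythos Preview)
Your proof is correct and follows the same overall architecture as the paper's: identify $\spec\B(R)$ as the one-point compactification of $\N$ with stalks $A$ at the isolated points and $B$ at $\infty$, verify the clopen-support condition, and then show that the constant sequences $b_1,b_2$ can have no \rr-infimum because any such infimum would have to equal $d$ in every finite coordinate, forcing $d\in B$ and hence $c=d$.

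The one genuine difference is in how the equality $s_k=d$ is obtained. The paper invokes Proposition~\ref{wrr} (quotients by idempotent-generated ideals preserve \rr-infima) to conclude immediately that the image of $s$ in each stalk $R_n=A$ is $b_1\wrr b_2=d$. You instead argue from first principles with the ``point-mass'' lower bounds $u_k$, squeezing $s_k$ between $d\rro s_k$ and $s_k\rro d$. Your route is more self-contained and makes the mechanism transparent without appealing to the earlier machinery; the paper's route is shorter once that proposition is in hand. The paper also asserts $d\notin B$ at the outset without comment, whereas you spell out why $d\in B$ would collapse $c$ and $d$---this is exactly the justification the paper's assertion needs, so your version is in that respect more complete.
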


\begin{proof} Note that $c\rro d$ and $d\notin B$. Let $r\in R$ be the sequence which is constantly $b_1$ and $s\in R$ that which is constantly $b_2$. The topology on $\spec R$ is the one-point compactification of $\N$, $\N \cup\{\infty\}$. The Pierce stalks of $R$ can be denoted $\{R_n\}_{n\in \N}$, each $R_n=A$ and $R_\infty=B$. Notice that the Pierce support of any element is clopen. The aim is to show that $r\wrr s$ does not exist.  Suppose $u=r\wrr s$. By Proposition~\ref{wrr}, for all $n\in \N$, $u_n= r_n\wrr s_n= b_1\wrr b_2= d$. This shows that $u$ does not have the form of an element of $R$. Hence $r$ and $s$ do not have an \rr-infimum. 
\end{proof} 

Note that the proof of Lemma~\ref{seqinf} would also apply if $R$ were the ring of sequences from $A$ eventually in $B$ (but not necessarily constant). The conclusion is the same and the proof uses the same elements as in the lemma but, in this case, $\B(R)$ is a complete boolean algebra. 

It should be recalled how, in a ring of the form $\cx$, two elements $f,g\in\cx$ can have a non-zero lower bound. In fact $0\ne h\rro f,g$ exactly when on $\coz h$, $f,g$ and $h$ coincide and, therefore, for $x$ on the boundary of $\coz h$, $f(x) = g(x) =0$. See \cite[Section~3]{BR3} for many more details.

\begin{ex} \label{notsuff} Let $X$ be the subspace of $\R^2$, $X=\{(x,y)\mid 0\le x\le 2\}$ and $Y$ the dense open subspace obtained by deleting the half-line $\{(1,y)\mid y\ge 0\}$. Then, $\C(X)\sbq \C(Y)$, $\C(X)$ and $\C(Y)$ are \rr-good and indecomposable but there are elements $f,g\in \C(X)$ which satisfy the conditions of Lemma~\ref{seqinf}.  \end{ex}

\begin{proof} Since both $X$ and $Y$ are connected, the rings are indecomposable. Further, since $X$ and $Y$ are locally connected, the rings are \rr-good. (See \cite[Theorem~3.5(1) and  Proposition~3.10]{BR3}.) Notice that $\cx\sbq \C(Y)$ because $Y$ is dense in $X$ so that the restriction $\cx\to \C(Y)$ is one-to-one. 
The functions $f$ and $g$ in $\cx$ will be defined.  

(i) For $y\le 0$, $f(x,y)= g(x,y) =0$. 

(ii) For $y_0>0$ and $0\le x\le 1/2$, $f(x,y_0)$ is linear between $f(0,y_0) = y_0$ and $f(1/2, y_0) = 0$.

(iii) For $y_0>0$ and $1/2 \le x\le 1$, $f(x,y_0)$ is linear between $f(1/2, y_0)=0$ and $f(1, y_0)=y_0$.

(iv) For $y_0> 0$ and $1\le x\le 2$ the same pattern is repeated between $1\le x\le 3/2$ as in (ii) and similarly for $3/2\le x\le 2$ as in (iii).

(v) For $y_0>0$ and $0\le x\le 1/2$ and $1\le x\le 3/2$, $g(x,y_0)$ coincides with $f(x,y_0)$. 

(vi) For $y_0>0$ and $1/2\le x\le 1$, $g(x, y_0)$ can be taken to be piecewise linear going from $g(1/2,y_0) = 0$ to $g(3/4, y_0)= y_0$ and then horizontally to $g(1, y_0)= y_0$.

(vii) For $y>0$ and $3/2\le x\le 2$, $g$ behaves as in (vi). 

Notice that $f$ and $g$ are nowhere equal in $\{(x,y) \mid y>0, 1/2< x <1\}$ and in $\{(x,y)\mid y>0, 3/2<x< 2\}$.

The first thing to notice is that in $\C(X)$, $f\wrr g= h$, where $h$ coincides with $f$ and $g$ in $\{(x,y)\mid y>0, 0\le x\le 1/2\}$ and is zero elsewhere. However, in $Y$, $f\res_Y\wrr g\res_Y$ is $h\res_Y+k$, where $k$ coincides with $f$ and $g$ in $\{(x,y)\mid y>0, 1<x\le 3/2\}$ and is zero elsewhere. Clearly $h\res_Y\ne h\res_Y+k$ and $h\res_Y\rro h\res_Y+k$. The point is that by removing the half-line the continuity along that half-line is no longer at issue which allows the larger \rr-lower bound. 

 The hypotheses of Lemma~\ref{seqinf} are satisfied.
 \end{proof}
 
 The following characterization of \rr-good rings in terms of their Pierce sheaf does not readily translate into properties of the ring structure but is worth noting nonetheless. The property about supports in Corollary~\ref{PiercNec} is not needed in the characterization but is a consequence of \rr-good.
 
 \begin{prop} \label{char} Let $R$ be a reduced ring.  Then, $R$ is \rr-good if and only if (i)~all the Pierce stalks of $R$ are \rr-good and (ii)~for $a,b\in R$ and $x\in \spec \B(R)$ if $c\in R$ is such that $c_x=a_x\wrr b_x$ then there is a neighbourhood $N$ of $x$ such that for all $y\in N$, $c_y=a_y\wrr b_y$.  \end{prop}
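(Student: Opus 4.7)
The plan is to prove the two directions separately, with the forward one being nearly immediate from results already established, and the backward one requiring a patching argument over the boolean space $\spec \B(R)$.

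For the forward implication, assume $R$ is \rr-good. Condition (i) follows from Corollary~\ref{PiercNec} (or directly from Proposition~\ref{wrr}, since the kernel $xR$ of $R \to R_x$ is generated by idempotents). For condition (ii), $d = a \wrr b$ exists in $R$, and Proposition~\ref{wrr} gives $d_y = a_y \wrr b_y$ for every $y \in \spec \B(R)$. So if $c_x = a_x \wrr b_x$, then $c$ and $d$ agree at the stalk over $x$; by the basic Pierce-sheaf fact quoted at the start of Section~5, they then agree on a clopen neighbourhood $N$ of $x$, and on $N$ we have $c_y = d_y = a_y \wrr b_y$.

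For the backward implication, assume (i) and (ii) and fix $a, b \in R$. For each $x \in \mc{X} = \spec \B(R)$ the element $d(x) = a_x \wrr b_x \in R_x$ exists by (i); lift it to some $c^{(x)} \in R$ with $c^{(x)}_x = d(x)$. By (ii), there is a clopen neighbourhood $N_x$ of $x$ on which $c^{(x)}_y = a_y \wrr b_y$. Compactness of $\mc{X}$ yields a finite subcover $N_{x_1}, \ldots, N_{x_k}$, and since $\mc{X}$ is a boolean space this subcover can be refined to a disjoint clopen partition $U_1, \ldots, U_k$ with $U_i \sbq N_{x_i}$ (set $U_i = N_{x_i} \setminus \bigcup_{j<i} N_{x_j}$). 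Let $e_i \in \B(R)$ be the idempotent with support $U_i$; the $e_i$ are pairwise orthogonal and sum to $1$. Define
\[
c \;=\; \sum_{i=1}^{k} e_i\, c^{(x_i)},
\]
so that for $y \in U_i$ one has $c_y = c^{(x_i)}_y = a_y \wrr b_y$.

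It remains to verify that this $c$ is indeed $a \wrr b$. At every stalk $c_y \rro a_y$, so $(c^2 - ca)_y = 0$ in $R_y$ for all $y$; faithfulness of the Pierce representation ($R$ embeds in $\prod_x R_x$) gives $c^2 = ca$, i.e.\ $c \rro a$, and similarly $c \rro b$. If $u \rro a, b$ in $R$, then $u_y \rro a_y, b_y$ in each indecomposable stalk $R_y$, hence $u_y \rro c_y$ since $c_y = a_y \wrr b_y$; applying faithfulness again yields $u \rro c$. Thus $c = a \wrr b$ in $R$.

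The main obstacle is the patching step: producing a global element of $R$ that realizes the stalkwise infimum. The essential ingredients are compactness and total disconnectedness of $\spec \B(R)$ (to refine a neighbourhood cover into a clopen partition corresponding to a decomposition of $1$ in $\B(R)$) together with hypothesis (ii), which is exactly what guarantees that the local lifts can be chosen to agree with the candidate infimum on a whole neighbourhood rather than at a single stalk.
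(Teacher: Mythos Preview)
Your proof is correct and follows essentially the same approach as the paper's: the forward direction via Proposition~\ref{wrr} and the Pierce-sheaf agreement property, and the backward direction via the standard compactness-and-patching argument over $\spec \B(R)$. The paper merely writes ``the usual Pierce method using the compactness of $\spec \B(R)$ works here,'' and you have spelled out precisely those details.
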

 
 \begin{proof} If $R$ is \rr-good then for $a,b\in R$ and $c=a\wrr b$, $c_x=a_x\wrr b_x$ globally by Proposition~\ref{wrr}. In the other direction, the usual Pierce method using the compactness of $\spec \B(R)$ works here. \end{proof}
 
 The conditions (i) and (ii) above are the same as saying that for $a,b\in R$ and each $x\in \spec\B(R)$ there is $e(x)\in \B(R)\setminus x$ such that $e(x)a \wrr e(x)b$ exists in $R$. \\
 
\setcounter{section}{6} \setcounter{thm}{0}
\noindent\tbf{6. Generalized power series rings which are wB.} If $R$ is an \rr-good ring it seems difficult to determine whether or not $R[X]$ is also \rr-good. There are too many equations involved. However, there are a few cases where there are results to be gleaned. In particular where the ring $R$ is wB. It will not be necessary to restrict the discussion to polynomial rings. 

In \cite[Section~4]{R}, P. Ribenboim defined a very general form of power series ring.  The starting point is a ring $R$ and an ordered monoid $(M,\le)$. Unlike in \cite{R}, $M$ will be written in multiplicative notation. In what follows the order is taken to be total and \emph{strict}, i.e., if $a,b,c\in M$ and $a<b$ then $ac< bc$ and $ca< cb$. Neither $R$ nor $M$ is assumed to be commutative. The 1 in $R$ and that in $M$ will both be written 1. The starting point is $R^M$, the set of functions $M\to R$. The power series ring will be defined on the subset $S=[[R^{M\le}]]$ of those elements $f$ of $R^M$ where $\supp f =\{m\in M\mid f(m)\ne 0\}$ is well-ordered. The ring structure on $S$ is defined as follows: the addition is function addition and the multiplication is convolution. The multiplication is well defined because of the restriction on the supports of elements of $S$. (See \cite{R} for details.) 

In what follows, for $f\in S$ and $m\in M$, $f(m)$ is written $f_m$. The ring $R$ embeds in $S$. For $r\in R$ define $\tilde{r}\in S$ by $\tilde{r}_m=r$ if $m=1$ and $\tilde{r}_m=0$, otherwise. In the sequel $\tilde{r}$ will just be written $r$. Notice that the centre of $R$ is in the centre of $S$. For $0\ne f\in S$, there is a least element in $\supp f$, it is called $\mu(f)$. 

With the notation above, two lemmas will be useful.

 \begin{lem}\label{ps-image}   Let $I$ be an ideal of $R$ and $\bar{R}= R/I$. Then the ring $\bar{S}$ defined by, for $f\in S$, $\bar{f}_m= \ov{f_m}$ gives a surjective ring homomorphism, $[[R^{M\le}]]\to [[\bar{R}^{M\le}]]$, with kernel $\{f\in S\mid f_m\in I, \forall m\in M\}$. \end{lem}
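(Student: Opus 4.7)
The plan is to verify directly that the map $\phi\colon S \to \bar{S}$ sending $f \mapsto \bar{f}$, where $\bar f_m = \overline{f_m}$, is a well-defined surjective ring homomorphism with the stated kernel. Essentially everything reduces to checking componentwise against the definitions in $R^M$, combined with the observation that $\supp \bar{f}\sbq \supp f$, so well-ordered support is preserved.

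First I would confirm well-definedness on the target: since $\supp \bar{f}$ is a subset of $\supp f$, which is well-ordered in $(M,\le)$, any subset is also well-ordered, so $\bar{f} \in [[\bar{R}^{M\le}]]$. Next, additivity is immediate from $\overline{(f+g)_m} = \overline{f_m+g_m} = \overline{f_m}+\overline{g_m}$. For multiplicativity, recall that the convolution coefficient $(fg)_m = \sum_{uv=m} f_u g_v$ is a \emph{finite} sum (here is where the well-ordered support hypothesis in Ribenboim's construction is used, and it is cited from \cite{R}), so applying the ring surjection $R\thra \bar{R}$ term by term gives
\[
\overline{(fg)_m}\;=\;\sum_{uv=m}\overline{f_u}\,\overline{g_v}\;=\;(\bar{f}\bar{g})_m,
\]
exactly as required. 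The multiplicative identity is preserved since $\widetilde{1}_1 = 1$ and $\widetilde{1}_m = 0$ for $m\ne 1$, and its image is $\widetilde{\bar 1}$.

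For surjectivity, given any $g \in [[\bar{R}^{M\le}]]$, define $f\in R^M$ by choosing for each $m \in \supp g$ a lift $f_m \in R$ of $g_m$ and setting $f_m = 0$ otherwise. Then $\supp f = \supp g$ is well-ordered, so $f\in S$ and $\phi(f) = g$. For the kernel, $\phi(f) = 0$ in $\bar{S}$ means $\overline{f_m}=0$ for every $m\in M$, i.e.\ $f_m\in I$ for every $m\in M$, which is precisely the set described.

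There is no real obstacle: the only point where one must be slightly careful is noting that the convolution sums remain finite (so the termwise reduction modulo $I$ makes sense and commutes with summation), but this is guaranteed by the well-ordered support condition built into the definition of $[[R^{M\le}]]$. Everything else is a direct componentwise verification.
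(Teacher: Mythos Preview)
Your proof is correct and simply spells out the direct componentwise verification that the paper declares ``clear''; the paper gives no argument beyond that word, so your approach is exactly what the authors intend.
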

\begin{proof} This is clear. \end{proof}

\begin{lem} \label{ps-dom}  (1)~In the construction of $S$, assume that $R$ is a domain. Then $S$ is a domain. (2)~In the construction of $S$, assume that $R$ is reduced. Then $S$ is reduced. \end{lem}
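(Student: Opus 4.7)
The plan rests on a single observation about the least element of the support. For a nonzero $f \in S$, the set $\supp f$ is well-ordered, so $\mu(f)$ is available. The key combinatorial claim is that for nonzero $f_1, \ldots, f_n \in S$, the coefficient of $m := \mu(f_1)\cdots \mu(f_n)$ in the convolution product $f_1 \cdots f_n$ equals $f_1(\mu(f_1)) \cdots f_n(\mu(f_n))$; i.e., there is a unique factorisation of $m$ that contributes to $(f_1\cdots f_n)(m)$. I would prove this by strictness of the order: if $u_1 \cdots u_n = m$ with $u_i \in \supp f_i$, then each $u_i \geq \mu(f_i)$, and applying the strict compatibility of $\leq$ with multiplication (first on the right, then on the left) gives $u_1 \cdots u_n \geq \mu(f_1)\cdots \mu(f_n)$ with equality only when every $u_i = \mu(f_i)$. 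This is the step I expect to be the main (really only) obstacle, and it is precisely where strictness of the order, rather than merely compatibility with $\leq$, is essential.

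For (1), suppose $R$ is a domain and let $f,g \in S$ both be nonzero. By the observation above, $(fg)(\mu(f)\mu(g)) = f(\mu(f))\, g(\mu(g))$, which is nonzero in $R$ because $R$ has no zero divisors and both factors are nonzero by definition of $\mu$. Hence $fg \neq 0$, so $S$ is a domain.

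For (2), suppose $R$ is reduced and that some nonzero $f \in S$ satisfies $f^n = 0$ for some $n \geq 1$. Applying the observation with $f_1 = \cdots = f_n = f$, the coefficient of $\mu(f)^n$ in $f^n$ is $f(\mu(f))^n$, so $f(\mu(f))^n = 0$ in $R$. Since $R$ is reduced, $f(\mu(f)) = 0$, contradicting $\mu(f) \in \supp f$. Hence $S$ has no nonzero nilpotent elements.

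Part (2) does not follow formally from part (1) because $M$ is not assumed commutative and $R$ need not be a domain, but the same $\mu$-argument handles both cases uniformly once the combinatorial claim is established. No appeal to earlier results in the paper is required beyond the definitions of $S$ and the strict ordered monoid $(M, \leq)$.
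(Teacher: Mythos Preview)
Your argument is correct and follows essentially the same route as the paper: both proofs evaluate the convolution at the product of the minimal support elements and use strictness of the order to show that only the single term $f(\mu(f))g(\mu(g))$ (respectively $f(\mu(f))^2$) survives. The only cosmetic difference is that the paper treats $n=2$ directly (using that reducedness is equivalent to having no nonzero square-zero element), whereas you state the combinatorial claim for general $n$-fold products; this extra generality is harmless but not needed.
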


\begin{proof} (1)~Suppose $f,g\in S$ are non-zero but $fg=0$. Put $\mu(f)= m_0$ and $\mu(g) = n_0$. Consider $0=(fg)_{m_0n_0}= \sum_{kl=m_0n_0} f_kg_l$. By the order relation, if $k<m_0$ or $l<n_0$ then $f_kg_l=0$. To have a non-zero term it would be necessary to have $m_0\le k$ and $n_0\le l$. But if either inequality is strict, $kl> m_0n_0$. Hence, $(fg)_{m_0n_0}= f_{m_0}g_{n_0} =0$. Since $R$ is a domain, this is impossible.   

(2)~Suppose that $0\ne f\in S$ with $f^2=0$. Put $\mu(f) = m_0$.  Consider $(f^2)_{m_0^2} = \sum_{kl=m_0^2}f_kf_l$. If $k<m_0$ or $l<m_0$ then $f_kf_l=0$. It follows that $(f_{m_0})^2=0$, which is impossible. Hence, $f=0$.    \end{proof}

A boolean algebra $B$ is said to be $\kappa$-complete, $\kappa$ a cardinal number, if every $E\sbq B$ with $|E| \le \kappa$ has an infimum, $\bigwedge_{e\in E}\,e$.

The following theorem was proved, in the commutative case, by Liu and Ahsan, \cite[Theorem~2.3]{LA}, i.e., where both $R$ and $M$ are commutative.  In \cite{LA}, the term \emph{PP-ring} is used for wB ring.  In what follows, both commutativity requirements are removed. Moreover, the proof uses different techniques. An earlier result of Fraser and Nicholson (see \cite{FN}) does not require $R$ to be commutative but deals with polynomial rings and the usual power series rings.

\begin{thm} \label{ps-thm}  Let $R$ be a reduced ring and $(M\le )$ a monoid which is totally ordered and in which the ordering is strict.  Let $\kappa$ be the largest cardinality of a well-ordered subset of $M$.  The ring $S = [[R^{M\le}]]$ is wB if and only if $R$ is wB and $\B(R)$ is $\kappa$-complete.    \end{thm}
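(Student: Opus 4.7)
The plan is to prove both directions after establishing two preliminary facts. First, every nonzero idempotent of $S$ lies in (the image of) $R$, so $\B(S)=\B(R)$: if $e\in S$ is idempotent with $m_0=\mu(e)$, strict order combined with $R$ being reduced gives $(e^2)_{m_0^2}=e_{m_0}^2\ne 0$ (by the argument of Lemma~\ref{ps-dom}), hence $\mu(e^2)=m_0^2$; since $e^2=e$ also gives $\mu(e^2)=m_0$, we get $m_0=m_0^2$, and the only idempotent of a strictly ordered monoid is $1$, so $m_0=1$. If $\supp e$ contained a further element, let $m_1$ be the next smallest; the only decompositions $kl=m_1$ with $k,l\in\supp e$ are $(1,m_1)$ and $(m_1,1)$, so $e_{m_1}=2e_1e_{m_1}$, and multiplying by $e_1$ forces $e_1e_{m_1}=0$ and hence $e_{m_1}=0$. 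Second, if $R$ is wB and $\B(R)$ is $\kappa$-complete, then for any family $\{e_i\}_{i\in I}$ of idempotents with $|I|\le\kappa$, $\bigcap_i e_iR=\bigl(\bigwedge_i e_i\bigr)R$: only the forward inclusion needs proof, and for $r$ in the intersection, write $\ann_R(r)=fR$; then each $1-e_i$ kills $r$, so $1-e_i\le f$, making $1-f$ a common lower bound of $\{e_i\}$, whence $1-\bigwedge_i e_i\le f$ and $r(1-\bigwedge_i e_i)=0$.

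For the necessary direction, assume $S$ is wB. Given $r\in R$, $\ann_S(r)=eS$ with $e\in\B(R)$ by the first fact, and $\ann_R(r)=eS\cap R=eR$ (since for $g=eh\in eS\cap R$ one has $g=g_1=eh_1\in eR$), so $R$ is wB. For the $\kappa$-completeness of $\B(R)$, take any family $\{e_\alpha\}_{\alpha\in A}$ with $|A|\le\kappa$; by the hypothesis on $\kappa$ there is an injection of $A$ onto a well-ordered subset $\{w_\alpha\}_{\alpha\in A}\sbq M$. Define $f\in S$ by $f_{w_\alpha}=1-e_\alpha$ and $f_m=0$ otherwise, and write $\ann_S(f)=\tilde e S$ with $\tilde e\in\B(R)$. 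The equations $\tilde e(1-e_\alpha)=0$ give $\tilde e\le e_\alpha$ for every $\alpha$, while any common lower bound $e'$ of $\{e_\alpha\}$ satisfies $e'f=0$, hence $e'\in \tilde eS\cap R=\tilde e R$ and $e'\le\tilde e$. Thus $\tilde e=\bigwedge_\alpha e_\alpha$.

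For the sufficient direction, suppose $R$ is wB and $\B(R)$ is $\kappa$-complete, and let $f\in S$. Enumerate $\supp f=\{m_\alpha\}_{\alpha<\lambda}$ in strictly increasing order (so $|\lambda|\le\kappa$), pick $e_\alpha\in\B(R)$ with $\ann_R(f_{m_\alpha})=e_\alpha R$, and set $e=\bigwedge_\alpha e_\alpha$. Then $ef_{m_\alpha}=e\cdot e_\alpha f_{m_\alpha}=0$ for every $\alpha$, so $ef=0$ and $eS\sbq\ann_S(f)$. Conversely, let $g\in\ann_S(f)$, put $h=(1-e)g$ (so $hf=0$), and suppose toward a contradiction that $h\ne 0$, with $n_0=\mu(h)$. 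The crux is a transfinite induction on $\alpha<\lambda$ establishing $h_{n_0}f_{m_\alpha}=0$. For $\alpha=0$ the strict order makes $h_{n_0}f_{m_0}$ the only term of $(hf)_{n_0m_0}$, so it vanishes. For the inductive step, the hypothesis together with the second preliminary fact yields $h_{n_0}\in\epsilon_\alpha R$ where $\epsilon_\alpha=\bigwedge_{\beta<\alpha}e_\beta$; in $(hf)_{n_0m_\alpha}=0$, any term $h_kf_l$ other than $h_{n_0}f_{m_\alpha}$ must have $k>n_0$ and $l=m_\beta$ for some $\beta<\alpha$ (by strict order and cancellation in $M$), so multiplying the equation by $\epsilon_\alpha$ annihilates every such term (via $\epsilon_\alpha f_{m_\beta}=0$) and leaves $h_{n_0}f_{m_\alpha}=\epsilon_\alpha h_{n_0}f_{m_\alpha}=0$. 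Once the induction is complete, $h_{n_0}\in\bigcap_\alpha e_\alpha R=eR$, so $h_{n_0}=e(1-e)g_{n_0}=0$, contradicting $n_0\in\supp h$; thus $h=0$ and $g\in eS$, proving $\ann_S(f)=eS$. The principal obstacle is orchestrating this transfinite induction: it needs the $\kappa$-completeness precisely so that $\epsilon_\alpha$ exists at every stage of $\supp f$, and the second preliminary fact is indispensable to pass from the pointwise vanishing $h_{n_0}f_{m_\beta}=0$ $(\beta<\alpha)$ to the multiplicative identity $h_{n_0}=\epsilon_\alpha h_{n_0}$ needed to close stage~$\alpha$.
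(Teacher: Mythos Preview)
Your proof is correct, and the necessary direction together with the identification $\B(S)=\B(R)$ closely parallels the paper's (your trick of multiplying $e_{m_1}=2e_1e_{m_1}$ by $e_1$ is in fact tidier than the paper's detour through $f^3=f$).

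The sufficient direction, however, is genuinely different. The paper argues structurally: using Lemma~\ref{ps-image} it identifies each Pierce stalk $S_x$ with $[[(R_x)^{M\le}]]$, which is a domain by Lemma~\ref{ps-dom}(1) since $R_x$ is a domain; hence $S$ is awB, so $\ann_S f$ is generated by a \emph{set} $E$ of idempotents, and $\kappa$-completeness is invoked only once, to replace $E$ by the single idempotent $\bigwedge_{m\in\supp f} e_m$. Your route is computational and self-contained: you bypass the Pierce sheaf and the awB characterization entirely, instead running a transfinite induction along $\supp f$ that peels off one coefficient $f_{m_\alpha}$ at a time, using $\kappa$-completeness at every stage to form the partial meets $\epsilon_\alpha$ and your ``second preliminary fact'' to convert the pointwise annihilation $h_{n_0}f_{m_\beta}=0$ into the algebraic identity $h_{n_0}=\epsilon_\alpha h_{n_0}$. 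The paper's approach is shorter and more conceptual, leaning on the machinery of Section~5; yours is more elementary and would survive transplantation to a context without that machinery, at the cost of the delicate bookkeeping in the induction (which you have handled correctly---the key point that every non-principal term of $(hf)_{n_0m_\alpha}$ has its $f$-factor indexed by some $m_\beta$ with $\beta<\alpha$ follows, as you note, from strictness and the cancellativity it implies).
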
 

\begin{proof}  The first step is to show that $\B(S) = \B(R)$. Let $0\ne f= f^2\in S$ and put $\mu(f) = m_0$. It will be shown that $m_0=1$ and that $f= f_1\in \B(R)$. Since $(f^2)_{m_0} = \sum_{kl=m_0}f_kf_l = f_1f_{m_0} + f_{m_0}f_1= f_{m_0}$, there are four possibilities: (1)~$1< m_0$, (2)~$m_0<1$ and  $1\notin \supp f$, (3)~$m_0<1$ and $1\in \supp f$ and (4)~$m_0=1$.  

The first is impossible since, here, $f_1=0$. The second is excluded for the same reason.  Assume $m_0<1 $ and that $1\in \supp f$. Here $m_0^2< m_0 <1$. Hence, $f_{m_0^2} =0$. Also $$0=f_{m_0^2} = (f^2)_{m_0^2} = \sum_{kl=m_0^2} f_kf_l = (f_{m_0})^2 + f_1f_{m_0^2} + f_{m_0^2}f_1= (f_{m_0})^2 \;. $$  It follows that $f_{m_0}=0$, which is impossible. Hence, $m_0=1$. Thus, $f_1=f_1^2\in \B(R)$. 

Suppose that $\supp f \ne \{1\}$ and that the minimal element of $\supp f\setminus \{1\}$ is $m_1$. Then $$f_{m_1} = (f^2)_{m_1} = \sum_{kl=m_1} f_kf_l = f_1f_{m_1} + f_{m_1}f_1\;.$$ Since $f_1\in \B(R)$, $f_{m_1} = 2f_1f_{m_1} $. Since $f^3=f$, a similar computation shows that $f_{m_1} = 3f_1f_{m_1}$, giving that $f_1f_{m_1}=0$.  This shows that $f_{m_1} =0$, a contradiction. Hence, $f=f_1$ is an element of $R$. It follows that $\B(S) = \B(R)$. 

Recall that a ring $T$ is awB if and only if its Pierce stalks are domains. (This is \cite[Proposition~2.5]{BR3}, a non-commutative version of \cite[Theorem~2.2]{NR}.) 

First assume that $R$ is wB and that $\B(R)$ is $\kappa$-complete. Then, for $x\in \spec \B(S)= \spec \B(R)$, $S_x$ is a domain by Lemma~\ref{ps-dom}~(1). This shows that $S$ is awB. This means that for $f\in S$, $\ann_S f=ES$, where $E\sbq \B(R)$. For each $m\in M$, let $\ann_R f_m= e_mR$, where $e_m\in \B(R)$. Put $F= \{e_m\}_{m\in M}$. For each $e\in E$, $e$ is a lower bound for $F$ since $e = e_me$, for each $m\in M$. Hence, $F$ has lower bounds. By the assumption, $\bigwedge_F e_m= \varep$ exists. For each $e\in E$ and each $e_m$, $e\le \varep \le e_m$. Thus $\ann_S f= \varep S$. 

Next, if $R$ is not wB, there is $r\in R$ with $\ann_R r$ is not generated by an idempotent. Then $\ann_S r$ is also not generated by an idempotent. Now suppose $R$ is wB but that $\B(R)$ is not $\kappa$-complete. Then there exists $E\sbq \B(R)$, $|E| = \kappa$, with non-zero lower bounds but no greatest lower bound. Let $K$ be a well-ordered subset of $M$ with $|K|= \kappa$. Define a bijection $\phi \colon K\to E$ and set $f\in S$ to be, for $m\in M$, $f_m= 1-\phi(m)$ if $m\in K$ and $f_m=0$ otherwise.  For any lower bound $e$ of $E$, $ef =0$ since, for $m\in K$, $(ef)_m =e(1-\phi(m)) =0$. Now suppose that $\varep \in \B(R)$ is such that $\ann f=\varep S$. Then, for $m\in K$, $\varep (1-\phi(m)) =0$, making $\varep$ a lower bound for $E$. Moreover, since each $e\in E$ is in $\ann f$, $e= e\varep$. This would make $\varep$ the infimum of $E$, a contradiction. 
 \end{proof}
 
 The argument used about the element $f$ at the end of the proof above is modelled on that of \cite[Theorem~2.3]{LA}. 
 
 The ring $S$, as above, has an important subring, $[R^{M\le}]$ of those elements of $S$ of finite support. Here the theorem needs no constraints on $\B(R)$.
 
 \begin{cor} \label{ps-poly}  Let $R$ and $M$ be as in the theorem. The ring $[R^{M\le}]$ is wB if and only if $R$ is wB. \end{cor}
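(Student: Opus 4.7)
The plan is to follow the template of Theorem~\ref{ps-thm} almost verbatim, exploiting the single simplification that every element of $T:=[R^{M\le}]$ has finite support, so that the only place in that proof invoking $\kappa$-completeness of $\B(R)$ — the existence of $\bigwedge_{m\in M}e_m$ — degenerates into a trivial finite meet in a Boolean algebra.

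I would first re-run the argument that $\B(T)=\B(R)$. The computation in Theorem~\ref{ps-thm} forcing an idempotent $f\in S$ to satisfy $\mu(f)=1$ and $\supp f=\{1\}$ used only $f^2=f$ and $f^3=f$, and transfers unchanged to the subring $T$ of $S$.

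For the ``if'' direction, assume $R$ is wB. By Lemma~\ref{ps-dom}(2), $T$ is reduced, and for each $x\in\spec\B(R)=\spec\B(T)$ the finite-support analogue of Lemma~\ref{ps-image} identifies the Pierce stalk $T_x$ with $[(R/xR)^{M\le}]$, which is a domain by Lemma~\ref{ps-dom}(1) applied to the domain $R/xR$. Hence $T$ is awB by \cite[Proposition~2.5]{BR3}. Given $f\in T$ with $\supp f=\{m_1,\dots,m_n\}$, write $\ann_R f_{m_i}=e_iR$ (using $R$ wB) and set $\varepsilon:=e_1\cdots e_n\in\B(R)$. The final paragraph of the proof of Theorem~\ref{ps-thm} then works verbatim: every idempotent $e$ in the generating set of $\ann_T f$ satisfies $e\le e_i$ for each $i$, hence $e\le\varepsilon$, while $\varepsilon f_{m_i}=0$ for every $i$ gives $\varepsilon f=0$, so that $\ann_T f=\varepsilon T$.

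The converse is immediate: if $T$ is wB and $r\in R$, then $\ann_T r=eT$ for some $e\in\B(T)=\B(R)$, and for $s\in R$ the equation $rs=0$ is equivalent to $s=es$, an equation that lives in $R$, giving $\ann_R r=eR$. No step is a real obstacle; the one point that warrants some care is the Pierce-stalk identification $T_x\cong[(R/xR)^{M\le}]$, which is exactly where finite support does the work: for $f\in T$ all of whose coefficients lie in $xR$, the finitely many idempotents of $x$ witnessing this can be joined in $\B(R)$ to a single idempotent $e\in x$ with $f=ef\in xT$.
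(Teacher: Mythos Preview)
Your proposal is correct and follows essentially the same route as the paper's proof, which simply remarks that any Boolean algebra is $n$-complete for finite $n$, so the proof of Theorem~\ref{ps-thm} goes through unchanged. You have, in effect, unpacked that one-line proof, and in doing so you handle carefully a point the paper leaves implicit even in the theorem: the identification $T_x\cong[(R_x)^{M\le}]$, where finite support lets you replace finitely many witnessing idempotents in $x$ by their join.
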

 
 \begin{proof} For any $n\in \N$, any boolean algebra is $n$-complete. The proof of the theorem can proceed without any assumption about $\B(R)$.  \end{proof}
 
 In the more traditional setup of a finite set of variables $ \{X_1, \dots, X_n\}$ $ = \mc{X}$ and a ring $R$, the free monoid $\mc{X}^*$  with the well-ordering based on an order of the variables, the rings $[[R^{\mc{X}^*\le}]]$ and $[R^{\mc{X}^*\le}]$ are the traditional formal power series and the monoid ring, respectively.  In the case of the free commutative monoid $\mc{X}^{*c} $, the rings become the formal power series ring in commuting variables and the ring of polynomials. In the cases of formal power series rings in a finite number of variables, $\B(R)$ needs to be $\aleph_0$-complete in order to have wB rings. 
 
 The ring $S$ has other subrings when $M$ is infinite, in particular the subring $[R^{M\le^c}]$ of those elements of $S$ of countable support. There is one type of example which is of interest.

A topological space $Y$ is called a P-space if its ring of continuous real-valued functions, $\C(Y)$ is von-Neumann regular (see \cite[4JKL]{GJ} for much more detail about these spaces).

  \begin{cor}If $Y$ is a P-space, $R=C(Y)$ and $M$ as in Theorem~\ref{ps-thm} then $[R^{M\le^c}]$ is wB. \end{cor}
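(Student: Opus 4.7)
The plan is to adapt the proof of Theorem~\ref{ps-thm} to the subring $T := [R^{M\le^c}]$ of countable-support elements. Inspecting that proof, the $\kappa$-completeness of $\B(R)$ was invoked only in order to take $\bigwedge\{e_m : m \in \supp f\}$, where $\ann_R(f_m) = e_m R$; since every $f \in T$ has countable support, $\aleph_0$-completeness of $\B(R)$ will suffice for the same argument. Thus the statement reduces to verifying (i) that $R = C(Y)$ is wB, and (ii) that $\B(R)$ is $\aleph_0$-complete.

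For (i), $Y$ being a P-space means by definition that $C(Y)$ is von Neumann regular, which is certainly wB. For (ii), identify $\B(C(Y))$ with the Boolean algebra of clopen subsets of $Y$; any countable intersection of clopens is closed and a $G_\delta$, hence open in a P-space, hence clopen. This gives the required countable meets.

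With (i) and (ii) in hand, the argument of Theorem~\ref{ps-thm} transfers to $T$. First, the idempotent-chase used there inside $S$ applies equally inside $T \sbq S$ and, together with the inclusion $\B(R) \sbq \B(T) \sbq \B(S) = \B(R)$, gives $\B(T) = \B(R)$. Second, by Lemma~\ref{ps-image} and Lemma~\ref{ps-dom}(1) each Pierce stalk $S/xS$ is a domain; the inclusion $T \hookrightarrow S$ descends to an injection $T/xT \hookrightarrow S/xS$ because $xS \cap T = xT$ (both equal $\{f \in T : ef = f \text{ for some } e \in x\}$, using that every finitely generated ideal of the Boolean algebra $\B(R)$ is principal). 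Hence each Pierce stalk of $T$ is a domain and $T$ is awB. Third, given $f \in T$, write $\ann_T(f) = ET$ with $E \sbq \B(R)$; each $e \in E$ satisfies $e f_m = 0$ and so $e \le e_m$ for every $m \in \supp f$. The countable infimum $\varep = \bigwedge_{m \in \supp f} e_m$ exists by (ii) and satisfies $\varep f = 0$ together with $e \le \varep$ for every $e \in E$, so $\ann_T(f) = \varep T$.

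The only nontrivial step is (ii); everything else is the routine countable-support analogue of the proof of Theorem~\ref{ps-thm}.
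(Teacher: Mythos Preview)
Your proof is correct and follows essentially the same approach as the paper's: establish that $\B(C(Y))$ is $\aleph_0$-complete using the P-space hypothesis, and then observe that the argument of Theorem~\ref{ps-thm} goes through for the countable-support subring. The paper's proof is a single sentence verifying countable completeness (via the fact that in a P-space cozero sets are clopen and countable intersections of them remain clopen) and leaves the transfer of the Theorem~\ref{ps-thm} argument implicit, whereas you spell out that transfer in detail, including the identification $\B(T)=\B(R)$ and the verification that $xS\cap T=xT$ so that the Pierce stalks of $T$ embed in those of $S$; this extra care is fine but not strictly required, since one could simply rerun the entire proof of Theorem~\ref{ps-thm} with $T$ in place of $S$.
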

   
  \begin{proof} In a P-space, cozero sets are clopen, as is a countable intersection of cozero sets (\cite[4J,(4)]{GJ}). Hence $\B(C(Y))$ is countably complete. \end{proof}

\end{document}